\title{Orderings and valuations in hyperfields}
\subjclass{Primary:  12J15, 12J20 Secondary: 14P10.}
\keywords{Hyperfield, hyperring, ordering, valuation}
\author[Katarzyna Kuhlmann \and Alessandro Linzi \and Hanna Stoja\l owska]{Katarzyna Kuhlmann \and Alessandro Linzi \and Hanna Stoja\l owska}
\thanks{The authors would like to thank F.-V.\ Kuhlmann  for his careful reading and remarks.}
\address{Institute of Mathematics\\
University of Szczecin\\ Wielkopolska 15\\ 70-451 Szczecin,
Poland} \email{katarzyna.kuhlmann@usz.edu.pl}
\address{Institute of Mathematics\\
University of Szczecin\\ Wielkopolska 15\\ 70-451 Szczecin,
Poland} \email{linzi.alessandro@gmail.com}
\address{Institute of Mathematics\\
University of Szczecin\\ Wielkopolska 15\\ 70-451 Szczecin,
Poland} \email{hanna.stojalowska@phd.usz.edu.pl}
\date{7.06.2021}
\newcommand{\N}{\mathbb{N}}
\newcommand{\Q}{\mathbb{Q}}
\newcommand{\R}{\mathbb{R}}
\DeclareMathOperator{\Hom}{Hom}
\DeclareMathOperator{\sgn}{sgn}
\theoremstyle{plain}
\newtheorem{tw}{Theorem}[section]
\newtheorem{pr}[tw]{Proposition}
\newtheorem{lem}[tw]{Lemma}
\newtheorem{co}[tw]{Corollary}
\newtheorem{rem}[tw]{Remark}
\theoremstyle{definition}
\newtheorem{df}[tw]{Definition}
\newtheorem{example}[tw]{Example}
\theoremstyle{remark}
\newtheorem{re}[tw]{Remark}
\begin{document}
\begin{abstract}
We introduce and study in detail the notion of compatibility between valuations and
orderings in real hyperfields. We investigate their relation with valuations and orderings induced on factor and residue hyperfields. Much of the theory from real fields can be generalized to real hyperfields; we point out facts that cannot. We generalize the Baer-Krull theorem to real hyperfields.
\end{abstract}
\maketitle
\section{Introduction}
Hyperfields are variants of fields where the operation of addition is multivalued. They were introduced by Krasner
in 
\cite{Krasner56}. The inspiration for Krasner's hyperfields came from valued fields. The first hyperfields considered by Krasner were of the form $K^\times/(1+\mathcal M^{\gamma}) \cup \{0\}$, where $K$ is
a field with valuation $v$ and $\mathcal M^{\gamma} = \{a\in K \mid v(a)> \gamma\}$, $\gamma \in v(K^\times)$.
These hyperfields inherit the valuation from $K$, so Krasner axiomatized not only the notion of hyperfields
but also the notion of  valuations on hyperfields. 
This notion was adopted and used by Tolliver \cite{Tolliver} and Lee\cite{Lee}. In \cite{DeS} Davvaz
and Salasi introduced a more general notion of a valuation on a hyperring.\par
Meanwhile, Marshall (\cite{M}) generalized to hyperfields 
the Artin-Schreier theory of real fields, which was later further developed by Marshall and G\l adki in \cite{MG1} and applied to quadratic form theory in \cite{MG2}. \par
However, one important part of the theory of real fields was not generalized: compatibility between valuations and orderings. The lack of valuation theoretical tools kept the authors of \cite{MG2} from fully investigating the theory of higher level orderings in hyperfields. In the classical field theory the connection between valuations and orderings is established through the notion of \emph{natural valuation}. This is the finest valuation of an ordered field $(K,<)$ whose valuation ring is convex with respect to $<$. Every valuation whose valuation ring  is convex with respect to $<$ is called \emph{compatible with $<$}. The compatibility between valuations and orderings on fields, introduced independently by Baer \cite{Baer} and Krull
\cite{Krull}, laid the foundations for the development of real algebra and real algebraic geometry.  We have found it important to generalize the notion of compatibility to hyperfields because of two reasons.
The first reason is to consolidate the theory of valuations on hyperfields beyond the special case considered in 
\cite{Krasner56}, \cite{Tolliver} and \cite{Lee}. 
The second reason is to give tools to study real algebraic geometry over hyperfields (the basics of algebraic geometry over hyperfields were recently presented in \cite{Jun}) and its possible applications to the solutions of open problems in real algebra, for instance in the theory of holomorphy rings and sums of squares. 
Further, our results are intended to lay the basis for the study of holomorphy rings, spaces of compatible valuations and higher level orderings on hyperfields. \par
In Section 2 we review and systematize  basic facts of the theory of hyperrings. We distinguish between hyperrings which come from Krasner's construction (factor hyperrings) and hyperrings obtained as the quotient  of a hyperring by a hyperideal (quotient hyperrings). In the literature the notion of a subhyperring is not consistent. Some authors (\cite{Jun})
define it as a subset which with ``induced operations'' (we will clarify this in Section 2) is itself a hyperring. 
Others (\cite{DeS}) require that a subhyperring is closed under multiplication and multivalued addition. 
We address the second case by the name of \emph{strict subhyperring}.
In this section we also generalize from rings to hyperrings some facts which we will use in the next sections.\par
The third section contains basic facts from the theory of real hyperfields as in \cite{M}. Moreover, we investigate when a factor hyperfield is real (Theorem \ref{cr})
and construct an interesting example of a real hyperfield which contains a subhyperfield which is not a strict subhyperring. We believe that this example is interesting for the future development of hyperfield extension theory.
\par
In the fourth section we systematize notions and results
connected to valued hyperfields. We show that every valuation subhyperring in a hyperfield is strict. 
We use this fact to establish the connection between  valuations and valuation rings, similarly  as in the case of valued fields. 
\par
The fifth section is the most important part of
this paper. We use the results from the previous section to establish  and investigate the notion of  compatibility between valuations and orderings in hyperfields. This relation generalizes the compatibility in ordered fields.
However, this generalization is not straightforward, and there are problems which we address in detail.
Finally, we prove a version of  the Baer-Krull theorem for 
real hyperfields.

\section{Preliminaries}

Let $H$ be a nonempty set and $\mathcal{P}^* (H)$ the family of nonempty subsets of $H$. A \emph{hyperoperation} $+$ on $H$ is a function which associates to every pair $(x,y) \in H \times H$ an element of $\mathcal{P}^* (H)$, denoted by $x+y$.
For a subset $A\subseteq H$ and $x \in H$ we have $A+x: = \bigcup_{a\in A} a+x$ and $x+A = \bigcup_{a\in A} x+a$.

\begin{df} \label{hypergp}
A \emph{canonical hypergroup} is a tuple $(H,+,0)$, where $+$ is a hyperoperation on $H$ and $0$ is an element of $H$ such that the following axioms hold:
\begin{itemize}
\item[(H1)] the hyperoperation $+$ is associative, i.e., $(x+y)+z=x+(y+z)$ for all $x,y,z \in H$,
\item[(H2)] $x+y=y+x$ for all $x,y\in H$,
\item[(H3)] for every $x\in H$ there exists a unique $-x$ such that $0\in x+(-x) =: x-x$,
\item[(H4)] $z\in x+y$ implies $y\in z-x$ for all $x,y,z\in H$.
\end{itemize}
\end{df}

Some authors (see for instance \cite{Mittas}) require additionally that $x+0 = \lbrace x \rbrace$ for all $x \in H$. This axiom follows from (H3) and (H4). Indeed, suppose that $y \in x+0$ for some $x,y \in H$. Then $0 \in y-x$ by (H4). Now $y = x$ follows from the uniqueness required in (H3).

\begin{df}
A \emph{commutative hyperring} is a tuple $(R,+,\cdot,0)$ which satisfies the following axioms:
\begin{itemize}
\item[(R1)] $(R,+,0)$ is a canonical hypergroup,
\item[(R2)] $(R,\cdot)$ is a commutative semigroup and $x\cdot0= 0$ for all $x\in R$,
\item[(R3)] the operation $\cdot$ is distributive with respect to the hyperoperation $+$. That is, for all $x,y,z\in R$,
\[
x\cdot(y+z)=x\cdot y+x\cdot z.
\]

\end{itemize}
From now on, by a \emph{hyperring} we mean a commutative hyperring.
If for all $x\in R$ we have that $xy=0$ implies $x=0$ or $y=0$, then $(R,+,\cdot,0)$ is called an \emph{integral hyperdomain}.
If the operation $\cdot$ has a neutral element $1 \neq 0$, then we say that $(R,+,\cdot,0,1)$ is a \emph{hyperring with unity}.  
If  $(R,+,\cdot,0,1)$  is a hyperring with unity and 
$(R\setminus\{0\}, \cdot,1)$ is a group, then $(R,+,\cdot,0,1)$ is called a \emph{hyperfield}.
\end{df}

\begin{re} \label{dd}
The double distributivity law, i.e.,
\[
(a+b)\cdot(c+d) = a\cdot c+a\cdot d+b\cdot c+b\cdot d,
\]
does not hold in general in hyperrings and hyperfields. However, the inclusion
\begin{equation} \label{dde}
 (a+b)(c+d) \subseteq ac+ad+bc+bd
\end{equation}
holds. This is shown by Viro in \cite{Viro}.
\end{re}

Every ring is a hyperring and every field is a hyperfield in which $x+y$ is a singleton.  A method of constructing  hyperrings with mutivalued addition is given by Krasner in \cite{Krasner}.  Let $A$ be a ring with unity and $T$ a normal subgroup of its multiplicative semigroup (i.e., $xT = Tx$ for every $x \in A$). The relation 
\begin{equation} \label{factor}
x \sim y \,\,\;\text{ if and only if }\,\,\; xu = yt \text{ for some } u, t \in T
 \end{equation}
is an equivalence relation. Denote by $[x]_T$ the equivalence class of $x$ and by $A_T$ the set of all equivalence classes. The set  $A_T$ with the operations
$$ [x]_T +[y]_T  = \{[xt+yu]_T\mid \; t,u \in T\};$$
$$[x]_T \cdot[y]_T  = [xy]_T,$$
is a hyperring. If $A$ is a field, then $A_T$ is a hyperfield and for every $x \in A$, $[x]_T = xT$.
The neutral element of multivalued addition in $A_T$ is $[0]_T = \{0\}$ and the additive inverse of $ [x]_T \in A_T$ is $[-x]_T$.
In \cite{Krasner} the hyperrings constructed in the way above are called quotient hyperrings. However, to distinguish them from the hyperrings constructed in \cite{Jun}, which are direct analog of the classical quotient rings, we will call them \emph{factor hyperrings}. 

\begin{example} \label{sign}
Consider the field of real numbers $\R$ with its multiplicative subgroup $(\R^{\times})^2$. We can identify the factor hyperfield $\R_{ (\R^{\times})^2}$ with the set $H = \lbrace -1, 0, 1 \rbrace$. This hyperfield is called the \emph{sign hyperfield}.
\end{example}

\begin{df}
Let $R$ and $S$ be hyperrings. A mapping $\varphi:R\to S$ is a \emph{homomorphism of hyperrings} if it satisfies
\begin{itemize}
    \item[(HH1)] $\varphi(0_R)=0_S$;
    \item[(HH2)] $\varphi(x\cdot_R y)=\varphi(x)\cdot_S\varphi(y)$ for all $x,y\in R$;
    \item[(HH3)] $\varphi(x+_R y)\subseteq\varphi(x)+_S \varphi(y)$ for all $x,y\in R$.
\end{itemize}
A homomorphism of hyperrings $\varphi$ is said to be \emph{strict} if it satisfies the following property which is stronger than (HH3):
\begin{itemize}
\item[(HH3$'$)] $\varphi(x+_R y)=\varphi(x)+_S \varphi(y)$ for all $x,y\in R$.
\end{itemize}
A strict bijective homomorphism of hyperrings is called an \emph{isomorphism of hyperrings}. If there is an isomorphism of hyperrings $\varphi:R\to S$, then one says that $R$ and $S$ are \emph{isomorphic} and writes $R\simeq S$.
\end{df}
\begin{re}
Let $\varphi:R\to S$ be a homomorphism of hyperrings. It follows from the definition that $\varphi(-x)=-\varphi(x)$ for all $x\in R$. Indeed, for all $x\in R$ we have that $0_R\in x-x$ so that $0_S=\varphi(0_R)\in\varphi(x-x)\subseteq\varphi(x)+\varphi(-x)$. Hence, the uniqueness required in axiom (H3) implies that $\varphi(-x)=-\varphi(x)$.
\end{re}

\begin{df}\label{subh}
Let $(R,+,\cdot,0)$ be a hyperring. A subset $S$ of $R$ is a \emph{subhyperring} of $R$ if it is multiplicatively closed and with the
induced hyperaddition
\begin{align*}
    a+_S b&:=(a+b)\cap S\quad(a,b\in S)
\end{align*}
is itself a hyperring.

A subset $S$ of $R$ is a \emph{strict subhyperring} of $R$ if for all $a,b\in S$ one has that $a-b\subseteq S$ and $ab\in S$.\par
\end{df}

\begin{re}
A strict subhyperring $H$ of a hyperring $(R,+,\cdot,0)$ is a subhyperring of $R$. However, not every subhyperring  is  strict (cf.\ Example \ref{naex}).
\end{re}

\begin{df}
Let $R$ be a hyperring. A strict subhyperring $I$ of $R$ is a
\emph{hyperideal of $R$} 
if $r\cdot x \in I$, for every $r \in R$ and $x \in I$. 
\end{df}

Let $R$ be a hyperring and $I$ a hyperideal of $R$. It is proved by Jun in \cite{Jun} that the relation $\sim_I$ on $R$ given by
\begin{equation} \label{quotient}
 x\sim_I y\Longleftrightarrow (x - y) \cap I\neq \emptyset
\end{equation}
is an equivalence relation with the equivalence classes $x+I=\bigcup_{a\in I}x+a$ (see \cite[Lemma 3.3]{Jun}). The set $R/I$ of equivalence classes  with a hyperoperation defined as
\[
(x+I)+(y+I):=\{z+I \mid z\in x+y\}
\]
and a multiplication
\[
(x+I)\cdot(y+I) = xy+I
\]
is a hyperring (see \cite[Proposition 3.5]{Jun}). 
 We call $R/I$ the \emph{quotient hyperring of $R$ modulo $I$}.\par 

\begin{df}
Let $\varphi:R\to S$ be a homomorphism of hyperrings. The set 
\[
\ker\varphi:=\{x\in R\mid \varphi(x)=0_S\}
\]
is called the \emph{kernel} of $\varphi$.
\end{df}
In the case of hyperrings one recovers an analog of the classical correspondence between ideals and kernels of homomorphisms. 
\begin{pr}\label{hypker}
Let $R$ be a hyperring. The kernel of a homomoprhism of hyperrings is a hyperideal of $R$. Every hyperideal of $R$ is the kernel of some strict homomorphism of hyperrings. 
\end{pr}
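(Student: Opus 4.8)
The proof splits into the two assertions, and in both cases it is essentially a matter of unwinding definitions. For the first assertion, fix a homomorphism of hyperrings $\varphi\colon R\to S$ and put $I=\ker\varphi$. By (HH1) we have $0_R\in I$, so $I$ is nonempty. For any $x\in I$ and $r\in R$, axiom (HH2) together with (R2) gives $\varphi(rx)=\varphi(r)\varphi(x)=\varphi(r)\cdot 0_S=0_S$, so $rx\in I$; taking $r\in I$ shows in particular that $I$ is multiplicatively closed, and the same computation yields the absorption property required of a hyperideal. It then remains to check that $a-b\subseteq I$ whenever $a,b\in I$. Here one uses that $\varphi(-b)=-\varphi(b)$, that $-0_S=0_S$, and that $0_S+0_S=\{0_S\}$ (this last fact being the consequence of (H3) and (H4) recorded just after Definition~\ref{hypergp}); then (HH3) gives $\varphi(a-b)\subseteq\varphi(a)+\varphi(-b)=0_S+(-0_S)=\{0_S\}$, so that indeed $a-b\subseteq\ker\varphi=I$. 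Hence $I$ is a strict subhyperring of $R$ which absorbs multiplication, i.e.\ a hyperideal.

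For the converse, let $I$ be a hyperideal of $R$, form the quotient hyperring $R/I$ described above, and consider the canonical projection $\pi\colon R\to R/I$ defined by $\pi(x)=x+I$. The plan is first to verify that $\pi$ is a strict homomorphism of hyperrings: $\pi(0_R)=0_R+I$ is the zero of $R/I$; the definition of the multiplication on $R/I$ gives $\pi(xy)=xy+I=(x+I)(y+I)=\pi(x)\pi(y)$; and the definition of the hyperaddition on $R/I$ gives $\pi(x+y)=\{z+I\mid z\in x+y\}=(x+I)+(y+I)=\pi(x)+\pi(y)$, so that even the stronger axiom (HH3$'$) holds. Then one identifies the kernel. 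Since $I$, being a strict subhyperring, contains $0_R$ (it contains $a-a$ for any $a\in I$, and $0_R\in a-a$), the zero class of $R/I$ is $0_R+I$; hence $x\in\ker\pi$ if and only if $x+I=0_R+I$, i.e.\ $x\sim_I 0_R$, i.e.\ $(x-0_R)\cap I=\{x\}\cap I\neq\emptyset$ by \eqref{quotient}, which holds precisely when $x\in I$. Therefore $\ker\pi=I$, and $\pi$ is the strict homomorphism we are after.

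I do not expect a genuine obstacle; once the basic facts about canonical hypergroups are in hand the argument is routine. The only point that needs a little attention is the inclusion $a-b\subseteq\ker\varphi$ in the first part, where it matters that $0_S+0_S=\{0_S\}$ rather than merely $0_S\in 0_S+0_S$ --- precisely the observation made after Definition~\ref{hypergp}. Everything else amounts to reading off the operations of $R/I$ from the construction of Jun recalled above, which makes both the strictness of $\pi$ and the equality $\ker\pi=I$ immediate through the description \eqref{quotient} of $\sim_I$.
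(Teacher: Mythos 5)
Your proof is correct and follows essentially the same route as the paper: a direct computation with the homomorphism axioms for the first assertion, and the canonical projection $\pi:R\to R/I$ for the second (the paper simply cites Jun's Proposition 3.6 for the properties of $\pi$ that you verify by hand). If anything, your version is slightly more careful than the paper's text, which invokes (HH3$'$) where only (HH3) is available for a general homomorphism --- and, as you note, the inclusion $\varphi(a-b)\subseteq\{0_S\}$ really does rest on the fact that $0_S+0_S=\{0_S\}$.
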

\begin{proof}
Let $\varphi:R\to S$ be a  homomorphism of hyperrings. Take $x,y\in \ker\varphi$ and $z\in R$. By (HH3$'$) we have that $\varphi(x-y)\subseteq\varphi(x)-\varphi(y)=\{0_S\}$ so $x-y\subseteq\ker\varphi$. Furthermore, $\varphi(xz)=\varphi(x)\varphi(z)=0_S$. Hence, $xz\in\ker\varphi$ and $\ker\varphi$ is a hyperideal of $R$. 

Every hyperideal $I$ of $R$ is the kernel of the canonical projection $\pi:R\to R/I$ which is a strict homomorphism of hyperrings (\cite[Proposition 3.6]{Jun}). 
\end{proof}

The proofs of the next lemma and corollary are the same as the proofs in  classical ring theory.
\begin{lem}\label{easy}
If a hyperideal $I$ of a hyperring $R$ contains a unit, then $I=R$.
\end{lem}
\begin{co}\label{idfield}
The only hyperideals of a hyperfield $F$ are $\{0\}$ and $F$.
\end{co}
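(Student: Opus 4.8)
The plan is to read this off directly from Lemma \ref{easy} together with the defining property of a hyperfield. First I would check that $\{0\}$ is a hyperideal of $F$: it is multiplicatively closed; by the remark following Definition \ref{hypergp} the induced hyperaddition on $\{0\}$ is $0+0=\{0\}$, so it is a strict subhyperring (indeed $0-0\subseteq\{0\}$ and $0\cdot 0\in\{0\}$); and $r\cdot 0=0\in\{0\}$ for every $r\in F$ by (R2). Thus $\{0\}$ is one of the hyperideals, and so is $F$ itself.

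Next I would take an arbitrary hyperideal $I$ of $F$ with $I\neq\{0\}$ and show $I=F$. Since $I\neq\{0\}$, there is some $x\in I$ with $x\neq 0$. By the definition of a hyperfield, $(F\setminus\{0\},\cdot,1)$ is a group, so $x$ is a unit of $F$. Hence $I$ is a hyperideal of $F$ containing a unit, and Lemma \ref{easy} gives $I=F$. Combining the two observations yields that $\{0\}$ and $F$ are exactly the hyperideals of $F$.

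I do not expect any genuine obstacle here: the argument is the verbatim transcription of the classical proof for fields, which is precisely what the sentence preceding the statement promises. The only point deserving a line of care is verifying that $\{0\}$ satisfies the definition of a hyperideal of a hyperring, and this is immediate from the axioms already recorded.
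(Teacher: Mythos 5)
Your argument is correct and is exactly the classical proof that the paper invokes when it says the proofs of Lemma \ref{easy} and this corollary "are the same as the proofs in classical ring theory": any nonzero element of a hyperfield is a unit, so a nonzero hyperideal contains a unit and equals $F$ by Lemma \ref{easy}. Your extra check that $\{0\}$ is indeed a hyperideal is a reasonable piece of care and raises no issues.
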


\begin{df}
Let $I$ be a hyperideal of a hyperring $R$.
\begin{itemize}
    \item[$(i)$] $I$ is called \emph{prime} if for all $x,y\in R$ we have that $xy\in I$ implies $x\in I$ or $y\in I$.
    \item[$(ii)$] $I$ is called \emph{maximal} if $I\subsetneq R$ and for all hyperideals $J$ of $R$ we have that $I\subsetneq J$ implies $J=R$.
\end{itemize}
\end{df}
Since every hyperideal is a strict subhyperring one can easily deduce the following.
\begin{co}
 The intersection of hyperideals of a hyperring $R$ is a hyperideal of $R$.
\end{co}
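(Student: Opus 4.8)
The plan is to verify directly that an arbitrary intersection $I:=\bigcap_{\alpha\in A}I_\alpha$ of hyperideals $I_\alpha$ of $R$ fulfils the conditions defining a strict subhyperring in Definition~\ref{subh} together with the absorption property, reducing each requirement on $I$ to the same requirement on every $I_\alpha$ via the trivial facts that $x\in I$ if and only if $x\in I_\alpha$ for all $\alpha$, and that $X\subseteq I$ whenever $X\subseteq I_\alpha$ for all $\alpha$.

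First I would record that $I\neq\emptyset$: each $I_\alpha$ is a hyperring and hence contains $0$ (for any $a\in I_\alpha$ we have $0\in a-a\subseteq I_\alpha$, since $I_\alpha$ is a strict subhyperring), so $0\in I$. Next, given $a,b\in I$, we have $a,b\in I_\alpha$ for every $\alpha$; as each $I_\alpha$ is a strict subhyperring, $a-b\subseteq I_\alpha$ and $ab\in I_\alpha$ for every $\alpha$, whence $a-b\subseteq I$ and $ab\in I$. By Definition~\ref{subh} this shows $I$ is a strict subhyperring of $R$, and in particular (by the remark following that definition) a subhyperring of $R$, so no separate verification of the canonical hypergroup axioms (H1)--(H4) on $I$ is required. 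For the absorption property, if $r\in R$ and $x\in I$ then $x\in I_\alpha$ for every $\alpha$, and since each $I_\alpha$ is a hyperideal, $rx\in I_\alpha$ for every $\alpha$; therefore $rx\in I$, and so $I$ is a hyperideal of $R$.

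I do not expect any genuine obstacle: this is the routine transcription of the classical argument, exactly as the sentence preceding the corollary announces. The only points worth flagging are that the clause ``$a-b\subseteq I$ and $ab\in I$ for all $a,b\in I$'' is literally the definition of a strict subhyperring — so the substance is already packaged in the earlier remark that every strict subhyperring is a subhyperring — and that, should one wish to allow the empty family, one has $\bigcap_{\alpha\in\emptyset}I_\alpha=R$, which is itself a hyperideal of $R$, so the statement remains correct in that degenerate case as well.
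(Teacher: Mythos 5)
Your proof is correct and follows exactly the route the paper intends: the corollary is stated without proof, prefaced only by the observation that every hyperideal is a strict subhyperring, and your element-wise verification of the strict-subhyperring conditions and the absorption property is precisely the ``easy deduction'' being alluded to.
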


In what follows we characterize prime and maximal hyperideals by means of quotient hyperrings as in classical ring theory.
\begin{pr}\label{prmaxid}
Assume that $R$ is a hyperring with unity. Let $I$ be a hyperideal $R$.
\begin{itemize}
    \item[$(i)$] $I$ is prime if and only if $R/I$ is an integral hyperdomain.
    \item[$(ii)$] $I$ is maximal if and only if $R/I$ is a hyperfield.
\end{itemize}
\end{pr}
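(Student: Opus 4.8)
The plan is to work through the canonical projection $\pi\colon R\to R/I$, which by Proposition~\ref{hypker} (see also \cite[Proposition~3.6]{Jun}) is a strict homomorphism of hyperrings with kernel $I$, and to use two elementary facts about $R/I$: the product is given by $(x+I)(y+I)=xy+I$, and $z+I=0_{R/I}$ if and only if $z\in I$ (since $z\sim_I 0$ means $(z-0)\cap I=\{z\}\cap I\neq\emptyset$, the equality $z-0=\{z\}$ being a consequence of the hypergroup axioms). Part~$(i)$ then becomes a direct translation: $(x+I)(y+I)=0_{R/I}$ holds iff $xy\in I$, so the statement ``$R/I$ has no zero divisors'' reads literally as ``$xy\in I$ implies $x\in I$ or $y\in I$'', i.e.\ $I$ is prime; no unit is needed for this direction.

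For the ``if'' part of $(ii)$ I assume $R/I$ is a hyperfield. Then $1+I=1_{R/I}\neq 0_{R/I}$, so $1\notin I$ and $I\neq R$ by Lemma~\ref{easy}. Let $J$ be a hyperideal with $I\subsetneq J$ and pick $x\in J\setminus I$. Since $x+I\neq 0_{R/I}$ it is invertible, so there is $y\in R$ with $(xy-1)\cap I\neq\emptyset$; choosing $c$ in this intersection and applying (H4) twice (so that first $xy\in 1+c$ and then $1\in xy-c$) gives $1\in xy-c$. As $xy\in J$ because $J$ absorbs products, and $c\in I\subseteq J$, strictness of the subhyperring $J$ yields $xy-c\subseteq J$; hence $1\in J$ and $J=R$ by Lemma~\ref{easy}. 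So $I$ is maximal.

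For the ``only if'' part of $(ii)$ I assume $I$ is maximal, so $I\subsetneq R$, $1\notin I$ (Lemma~\ref{easy}) and $1_{R/I}\neq 0_{R/I}$. Fix $x\notin I$; the task is to invert $x+I$ in $R/I$. Consider
\[
J:=\bigcup_{r\in R,\ a\in I}\,(rx+a),
\]
which contains $I$ (take $r=0$) and $x$ (take $r=1$, $a=0$), so $I\subsetneq J$. The key step is to check that $J$ is a hyperideal of $R$: absorption of multiplication by $R$ follows from distributivity via $w(rx+a)=(wr)x+wa$ together with $wa\in I$, and closure under the induced hyperaddition and under hyperdifferences follows from distributivity in the form $r_1x+(-r_2)x=(r_1-r_2)x$ together with $a_1-a_2\subseteq I$ for $a_1,a_2\in I$. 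Granting this, maximality forces $J=R$, so $1\in rx+a$ for some $r\in R$, $a\in I$; two applications of (H4) give $-a\in(rx-1)\cap I$, i.e.\ $(r+I)(x+I)=rx+I=1+I$, so $x+I$ is invertible. Since every nonzero element of $R/I$ is then invertible, $R/I\setminus\{0_{R/I}\}$ is closed under multiplication and is a group, so $R/I$ is a hyperfield.

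The only point that is not a routine transcription from commutative ring theory is the verification that $J=\bigcup_{r,a}(rx+a)$ is a hyperideal: in a ring the set $\{rx+a\}$ is visibly an ideal, whereas here each $rx+a$ is a set and $J$ is a union of such sets, so closure under the multivalued addition must be argued honestly. I expect distributivity~(R3) to be exactly what makes this work — in particular Viro's inclusion from Remark~\ref{dd} is not needed — and, if preferred, this can be isolated beforehand as a lemma to the effect that $\bigcup_{r\in R,\,a\in I}(rx+a)$ is the smallest hyperideal of $R$ containing $I\cup\{x\}$, in parallel with the description of finitely generated ideals in commutative rings.
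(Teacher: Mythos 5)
Your proof is correct and follows essentially the same route as the paper: part $(i)$ is the same direct translation via $z+I=0_{R/I}\iff z\in I$, and in part $(ii)$ both directions run through the hyperideal generated by $I$ and $x$ together with axiom (H4). The one substantive difference is that where you explicitly construct $J=\bigcup_{r\in R,\,a\in I}(rx+a)$ and verify it is a hyperideal, the paper instead takes $J$ to be the intersection of all hyperideals containing $I$ and $x$ and cites \cite[Lemma 3.22(3)]{Jun} for the fact that $1\in yx+i$ for some $y\in R$, $i\in I$; so you have correctly identified, and supplied in-line, exactly the verification that the paper outsources. Your sketch of that verification is sound: absorption follows from (R3), and for hyperdifferences one gets $u-v\subseteq r_1x-r_2x+(a_1-a_2)=\bigcup_{s\in r_1-r_2}\bigl(sx+(a_1-a_2)\bigr)\subseteq J$ using (R3), commutativity/associativity of $+$, and strictness of $I$. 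The trade-off is self-containedness versus brevity; your version is slightly longer but does not depend on the external lemma.
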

\begin{proof}
The proof of  $(i)$ is a copy of the corresponding proof in ring theory, since  $x+I=I$ if and only if $(x-0) \cap I \neq \emptyset$ which is equivalent to $x \in I$. \par
We come to the proof of $(ii)$. Suppose that $I$ is maximal. We have to show that $R/I$ is a hyperfield, i.e.,  every nonzero $x+I\in R/I$ has a multiplicative inverse.
Since $x+I$ is nonzero, we have that $x\notin I$. Consider the smallest hyperideal $J$ of $R$ which contains $I$ and $x$, i.e., the intersection of all hyperideals of $R$ which contain $I$ and $x$. Since $I$ is maximal we obtain that $J=R\ni 1$. By \cite[Lemma 3.22(3)]{Jun} we have
\[
1\in yx+i
\]
for some $y\in R$ and $i\in I$. By axiom (H4)  we obtain that $i\in (1-yx)$ and therefore $(1-yx)\cap I\neq\emptyset$. Thus, $1+I =yx+I$ and $y+I$ is a multiplicative inverse of $x+I$.
\par
For the converse, we assume that $R/I$ is a hyperfield and we take a hyperideal $J$ of $R$ such that $I\subsetneq J$. Take $x\in J\setminus I$. We have that $x+I\neq 0_{R/I}$ and hence there exists $y+I\in R/I$ such that
\[
xy+I=(x+I)(y+I)=1+I.
\]
Thus, $\emptyset\neq (1-xy)\cap I\subseteq (1-xy)\cap J$. Since $x\in J$ we have that $xy\in J$ and therefore $J=xy+J=1+J$. This shows that $1\in J$ which implies that $J=R$ by Lemma \ref{easy}.
\end{proof}

\section{Real hyperfields}
Based on Murray Marshall's results and terminology of \cite{M} we recall first the 
most important facts of Artin-Schreier theory that extends to hyperfields. 

Let $F$ be a hyperfield. A subset $P\subseteq F$ is an \emph{ordering} of $F$ if
\[
P+P\subseteq P,\quad P\cdot P\subseteq P,\quad P\cap -P=\emptyset,\quad P\cup-P=F^\times.
\]
If $F$ is a field, then with $P$ one can associate a relation of a linear order defined by $a<b$ iff $b-a \in P$.  In general, the ordered hyperfields do not have to be linearly ordered sets. However, the relation $a<b$ iff $b-a \subseteq P$ defines on $F$ a strict partial order, which do not have to be compatible with the hyperaddition.

A subset $T\subseteq F$ is called a \emph{preordering} in $F$ if
\[
T+T\subseteq T,\quad T\cdot T\subseteq T,\quad (F^{\times})^2 \subseteq T,\quad -1 \notin T.
\]

We say that a preordering $T \subseteq F$ is \emph{maximal} if $T \subseteq S$ for some preordering $S$ in $F$ implies that $S=T$.

The set of all orderings of a hyperfield $F$ we shall denote by $\mathcal{X}(F)$ and the set of all orderings of $F$ containing a subset $T \subset F$ we denote by $\mathcal{X}(F\mid T)$.

A hyperfield $F$ is \emph{real} if $\mathcal{X}(F)\neq\emptyset$.

The following results are proved in Marshall's article mentioned above.
\begin{tw}{\cite[Lemma 3.3 and Proposition 3.4]{M}} \label{Marshall}
Let $F$ be a hyperfield. 
\begin{enumerate}
\item Every maximal preordering $T$ of $F$ is an ordering.
 \item $F$ is real if and only if $-1 \notin \Sigma (F^{\times})^2$.
 \item For every preordering $T$ of $F$ we have 
 \[T = \bigcap_{P \in \mathcal{X}(F\mid T)} P. \]
\end{enumerate}
\end{tw}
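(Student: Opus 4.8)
The statement to prove is Theorem~\ref{Marshall}, attributed to Marshall, with three parts. The plan is to follow the classical Artin-Schreier strategy, adapted to the hyperfield setting where addition is multivalued.

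\medskip

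\textbf{Part (2).} This is the cornerstone and I would prove it first, since it feeds the other parts. One direction is immediate: if $F$ is real then it carries an ordering $P$, and since $(F^\times)^2 \subseteq P$ while $-1 \notin P$, no sum of squares can equal $-1$ (as $P + P \subseteq P$ and $P \cap -P = \emptyset$), so $-1 \notin \Sigma (F^\times)^2$. For the converse I would argue that if $-1 \notin \Sigma (F^\times)^2$, then $\Sigma (F^\times)^2$ is a preordering (the axioms $T+T\subseteq T$, $T\cdot T \subseteq T$, $(F^\times)^2 \subseteq T$ follow from distributivity, Remark~\ref{dd}, and the definition), hence by Zorn's lemma it extends to a maximal preordering, which by Part (1) is an ordering. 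So $F$ is real. The only subtlety here is checking that the union of a chain of preorderings is a preordering --- routine.

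\medskip

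\textbf{Part (1).} Let $T$ be a maximal preordering. I must show $T \cup -T = F^\times$ and $T \cap -T = \emptyset$; the other two ordering axioms are inherited from $T$ being a preordering. Emptiness of $T \cap -T$: if $a \in T \cap -T$ with $a \neq 0$, then $a \in T$ and $-a \in T$, so $-1 = a^{-1}(-a)$... but I need $a^{-1} \in T$; since $a^{-2} \in (F^\times)^2 \subseteq T$ and $a \in T$, we get $a^{-1} = a \cdot a^{-2} \in T$, hence $-1 = (-a) \cdot a^{-1} \in T$, contradicting $-1 \notin T$. For $T \cup -T = F^\times$: take $a \notin T$, $a \neq 0$, and show $-a \in T$. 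The standard move is to consider the set $T' = T + aT = \bigcup_{t_1, t_2 \in T}(t_1 + a t_2)$ (together with $T$ itself, or rather $T - aT$, depending on sign conventions) and show it is a preordering strictly containing $T$ unless $-a \in T$. Here is where the hyperfield structure demands care: one must verify $T' \cdot T' \subseteq T'$ using only the \emph{inclusion} $(b+c)(d+e) \subseteq bd+be+cd+ce$ from Remark~\ref{dd} rather than equality, and check $-1 \notin T'$. If $-1 \in T'$, then $-1 \in t_1 + a t_2$ (or similar) for some $t_1, t_2 \in T$; using axiom (H4) this rearranges to express $a$ (or $-a$) in terms of elements of $T$ and inverses thereof, forcing $-a \in T$. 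The maximality of $T$ then gives $T' = T$, so again $-a \in T$.

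\medskip

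\textbf{Part (3).} The inclusion $T \subseteq \bigcap_{P \in \mathcal X(F \mid T)} P$ is trivial. For the reverse, suppose $a \notin T$; I must find an ordering $P \supseteq T$ with $a \notin P$, equivalently $-a \in P$. The idea is that $T_a := T - aT$ (the preordering "generated" by $T$ and $-a$) does not contain $-1$ --- this is exactly the computation from Part (1): if it did, one would derive $a \in T$, a contradiction. Hence $T_a$ is a preordering, extends (by Part (2)'s Zorn argument plus Part (1)) to an ordering $P$, and $P \supseteq T_a \ni -a$, so $a \notin P$. This completes the intersection formula. The main obstacle throughout is Part (1): all the delicate hyperfield-specific reasoning --- replacing identities with the one-sided inclusion of Remark~\ref{dd}, and repeatedly invoking (H4) to "move terms across the sum" --- is concentrated in showing that the enlarged set $T + aT$ is closed under multiplication and still avoids $-1$.
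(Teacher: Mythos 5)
The paper offers no proof of this theorem: it is quoted verbatim from Marshall \cite[Lemma 3.3 and Proposition 3.4]{M}, so there is nothing internal to compare against. Your reconstruction is the standard Artin--Schreier argument adapted to multivalued addition, and it is essentially Marshall's own proof: the load-bearing steps (closure of $\Sigma(F^\times)^2$ and of $T-aT$ under multiplication via the one-sided inclusion of Remark \ref{dd}, and the use of (H4) to turn $-1\in t_1-at_2$ into $at_2\in 1+t_1\subseteq T$) are exactly right. One piece of bookkeeping in Part (1) should be tightened: to conclude $-a\in T$ you must adjoin $-a$, i.e.\ work with $T':=T\cup(-aT)\cup(T-aT)$ (the union is needed because $0$ need not lie in $T$, so $T$ and $-aT$ need not sit inside $T-aT$). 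Then the dichotomy is clean: if $-1\in T-aT$, the (H4) computation gives $a\in T$, a contradiction; otherwise $T'$ is a preordering containing $T$ and $-a$, and maximality gives $T'=T\ni -a$. As written, your sentence ``the maximality of $T$ then gives $T'=T$, so again $-a\in T$'' applied to $T+aT$ would instead yield $a\in T$, contradicting the hypothesis rather than proving the claim --- a sign slip you already half-acknowledge and which is easily repaired. With that fixed, Parts (2) and (3) follow as you describe.
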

The following remark was suggested to us by the referee.
\begin{rem}
 A hyperfield $F$ is real if  and only if there is a nontrivial homomorphism of 
 $F$ into the sign hyperfield $\mathbb S$.  Indeed, if $P$ is an ordering of $F$, then the map $\phi: F\rightarrow \mathbb S$ given by
 \[
\phi({a}):=\begin{cases}0 &\text{if }a=0\\1&\text{if }a\in P\\ -1&\text{if }a\in -P\end{cases},
\]
is a nontrivial homomorphism of hyperfields. On the other hand, if  $\phi$ is a nontrivial homomorphism of $F$ into the sign hyperfield then $\phi(-1) = -1$ and thus $-1$ cannot be a sum of squares in $F$. 
\end{rem}

\begin{lem}\label{cr0}
 Let $P$ be an ordering of a field $K$ and assume that a multiplicative subgroup $T$ of $K$ is contained in $P$. Consider the factor hyperfield $K_T = \{[a]_T\mid a\in K\}$.
 \begin{enumerate}
  \item If $a \in P$, then $[a]_T \subseteq P$.
  \item The set $P_T:=\{[c]_T \mid c\in P\}$ is an ordering of $K_T$.
 \end{enumerate}
 \end{lem}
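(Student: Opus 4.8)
The plan is to dispatch (1) by a one-line computation, then use it to extract a clean membership criterion for $P_T$, after which the four defining conditions of an ordering of $K_T$ fall out routinely. For (1), recall that since $K$ is a field we have $[a]_T = aT$. If $a\in P$, then for every $t\in T$ we have $t\in P$ by hypothesis, so $at\in P\cdot P\subseteq P$; hence $[a]_T = aT\subseteq P$. From this I would immediately record the criterion that, for $a\in K$, one has $[a]_T\in P_T$ if and only if $a\in P$: the forward implication is the definition of $P_T$, and conversely, if $[a]_T=[c]_T$ with $c\in P$, then $a\in[c]_T\subseteq P$ by part (1). In particular, since $0\notin P$ and $[0]_T=\{0\}$, we get $[0]_T\notin P_T$, so $P_T\subseteq K_T^\times$.

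With this criterion at hand, I would verify the four axioms. For $P_T\cdot P_T\subseteq P_T$: $[c]_T[d]_T=[cd]_T$ and $cd\in P$ whenever $c,d\in P$. For $P_T+P_T\subseteq P_T$: every element of $[c]_T+[d]_T$ has the form $[ct+du]_T$ with $t,u\in T$, and since $ct,du\in P$ (again using $T\subseteq P$) we get $ct+du\in P+P\subseteq P$, hence $[ct+du]_T\in P_T$. For $P_T\cup-P_T=K_T^\times$: given $a\in K^\times$, either $a\in P$, giving $[a]_T\in P_T$, or $-a\in P$, giving $[a]_T=-[-a]_T\in-P_T$. Finally, for $P_T\cap-P_T=\emptyset$: if $[a]_T$ lay in this intersection, the membership criterion would give $a\in P$, while $[a]_T\in-P_T$ means $-[a]_T=[-a]_T\in P_T$ and hence $-a\in P$; then $a\in P\cap-P=\emptyset$, a contradiction.

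All of the computations are routine, and the genuine content sits entirely in the hypothesis $T\subseteq P$: it is what makes part (1) true and hence what makes the membership criterion $[a]_T\in P_T\Leftrightarrow a\in P$ valid. This criterion is the only delicate point, since a priori $[a]_T\in P_T$ would merely say that \emph{some} representative of the class lies in $P$, which is weaker; the disjointness argument above would break down without it, and indeed the conclusion fails when $T\not\subseteq P$. So I expect the verification itself to be quick, with the modest care being to route the disjointness step through the criterion rather than through an arbitrary representative.
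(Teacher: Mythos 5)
Your proposal is correct and follows essentially the same route as the paper: part (1) via $[a]_T=aT\subseteq P\cdot P\subseteq P$, and part (2) by checking the four ordering axioms using the fact that $[a]_T\in P_T$ forces $a\in P$. The only difference is that you state this membership criterion explicitly, whereas the paper invokes it implicitly in the step ``by (1) we have that $a,b\in P$''; your version is slightly more careful but not substantively different.
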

\begin{proof}
 The first statement follows from the definition of 
 $[a]_T$ given by Equation \ref{factor}. Since $K$ is a field we have that $[a]_T = \{at\mid t\in T\}$. If $a \in P$, then $[a]_T \subseteq P$ since $T\subseteq P$.\par
 
 To prove the second statement, take $[a]_T,[b]_T \in P_T$. By (1) we have that $a , b \in P$ and since $T\subseteq P$, we obtain that $at+bs \in P$  for all $t,s\in T$. Therefore, 
\[
[a]_T+ [b]_T= \lbrace [at+bs]_T \mid t,s\in T \rbrace \subseteq P_T,
\]
showing that $P_T+ P_T \subseteq P_T$.
The proof that $P_T$ is multiplicatively closed is straightforward. 
Since $P \cup -P = K^{\times}$, we have that
$P_T \cup -P_T= K_T^\times$.
Suppose now that $[a]_T \in P_T$ and $-[a]_T = [-a]_T \in P_T$. Thus $a,-a\in P$, which is impossible. 
This shows that $P_T \cap -P_T=\emptyset$.
 \end{proof}

The following theorem gives a criterion for a factor hyperfield $K_T$ to be real.

\begin{tw}\label{cr}
Let $K$ be a field and $T$ a subgroup of $K^\times$. Then
\[
|\mathcal{X}(K_T)|=|\mathcal{X}(K|T)|.
\]
In particular, $\mathcal{X}(K_T)\neq\emptyset$ if and only if $\mathcal{X}(K|T)\neq\emptyset$.
\end{tw}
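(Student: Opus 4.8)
The plan is to exhibit an explicit bijection between $\mathcal{X}(K\mid T)$ and $\mathcal{X}(K_T)$. Let $\pi\colon K\to K_T$, $a\mapsto[a]_T$, be the canonical surjection. In one direction, Lemma~\ref{cr0} already does the work: if $P\in\mathcal{X}(K\mid T)$ then $P_T=\{[c]_T\mid c\in P\}$ is an ordering of $K_T$, so $\Phi(P):=P_T$ defines a map $\Phi\colon\mathcal{X}(K\mid T)\to\mathcal{X}(K_T)$. In the other direction, given $Q\in\mathcal{X}(K_T)$ I would set $\Psi(Q):=\pi^{-1}(Q)=\{a\in K\mid[a]_T\in Q\}$ and check that this is an ordering of $K$ containing $T$.

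Verifying that $\Psi(Q)$ is an ordering of $K$ is routine but requires a little care with the hyperaddition. For $a,b\in\Psi(Q)$ one has $[a+b]_T\in[a]_T+[b]_T$ (take $t=u=1$ in the description $[a]_T+[b]_T=\{[at+bu]_T\mid t,u\in T\}$), and since $[a]_T,[b]_T\in Q$ and $Q+Q\subseteq Q$ this forces $[a+b]_T\in Q$; as $[0]_T\notin Q$ we get $a+b\neq0$ and hence $a+b\in\Psi(Q)$. Multiplicative closure is immediate from $[ab]_T=[a]_T[b]_T$. If $a\in\Psi(Q)\cap-\Psi(Q)$ then $[a]_T\in Q$ and $[-a]_T=-[a]_T\in Q$, contradicting $Q\cap-Q=\emptyset$; and for $a\in K^\times$ we have $[a]_T\in K_T^\times=Q\cup-Q$, which gives $a\in\Psi(Q)\cup-\Psi(Q)$. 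Finally $T\subseteq\Psi(Q)$ because $[t]_T=[1]_T\in Q$ for every $t\in T$. Hence $\Psi(Q)\in\mathcal{X}(K\mid T)$.

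It remains to show $\Phi$ and $\Psi$ are mutually inverse. The composite $\Phi\circ\Psi$ is the identity because every element of $Q$ has the form $[c]_T$, so $(\Psi(Q))_T=\{[c]_T\mid[c]_T\in Q\}=Q$. For $\Psi\circ\Phi$, the inclusion $P\subseteq\Psi(P_T)$ is clear from the definitions; conversely if $[a]_T\in P_T$, say $[a]_T=[c]_T$ with $c\in P$, then $a\in[c]_T=cT\subseteq P$ by Lemma~\ref{cr0}(1), so $\Psi(P_T)\subseteq P$. Thus $\Phi$ is a bijection and $|\mathcal{X}(K_T)|=|\mathcal{X}(K\mid T)|$; the ``in particular'' clause is the special case in which one side is empty.

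I expect no serious obstacle: morally the argument is the order-theoretic version of the tautology $\pi^{-1}(\pi(S))=S$ for $T$-saturated subsets $S$ of $K$. The single place where a genuine hypothesis is used is the step $a\in[c]_T=cT\subseteq P$, which relies on $T\subseteq P$ through Lemma~\ref{cr0}(1) — without it $\Psi$ would fail to be a left inverse of $\Phi$. The only other thing to watch is the bookkeeping between $K$ and $K^\times$ (resp.\ $K_T$ and $K_T^\times$) when checking the two partition axioms, since orderings exclude $0$.
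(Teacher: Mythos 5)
Your proof is correct and follows essentially the same route as the paper: the same two maps $P\mapsto P_T$ and $Q\mapsto\pi^{-1}(Q)$ (the paper describes $\pi^{-1}(Q)$ as the union of the equivalence classes lying in $Q$), with the same verifications that $\pi^{-1}(Q)$ is an ordering of $K$ containing $T$. The only cosmetic difference is that you show the two maps are mutually inverse, whereas the paper merely checks each is injective and concludes from the two cardinality inequalities.
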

\begin{proof}
We define two injective maps $\mathcal{X}(K_T)\to\mathcal{X}(K|T)$ and $\mathcal{X}(K|T)\to\mathcal{X}(K_T)$.\par
Let $P$ be an ordering of $K$ such that $T\subseteq P$. Then $P_T:=\{[c]_T\mid c\in P\}$ is an ordering of $K_T$ by the previous lemma. We consider the map $P\mapsto P_T$ and show that it is injective. Assume that $P\neq Q$ are two orderings of $K$ which contain $T$. Let $a\in P\setminus Q$. Then $-a\in Q$ so that $-[a]_T=[-a]_T\in Q_T$ and since $Q_T$ is an ordering of $K_T$ we must have $[a]_T\in P_T\setminus Q_T$. Therefore, $P_T\neq Q_T$. We have shown that $|\mathcal{X}(K_T)|\geq |\mathcal{X}(K|T)|$.\par
For the reverse inequality, let $Q$ be an ordering of $K_T$. We take $P$ to be the union of all the equivalence classes in $K_T$ which are in $Q$. We then have $T=[1]_T\in Q$, so $T\subseteq P$. We now show that $P$ is an ordering of $K$. Pick $a,b\in P$. Then $[a]_T,[b]_T\in Q$ and since $[a+b]_T\in[a]_T+[b]_T\subseteq Q$, we obtain that $a+b\in P$. Moreover, $[ab]_T=[a]_T[b]_T\in Q$ so $ab\in P$. This shows that $P+P\subseteq P$ and $P\cdot P\subseteq P$. Take any $a\in K^\times$. If $[a]_T\in Q$, then $a\in P$. Otherwise, $[-a]_T=-[a]_T\in Q$ so that $a\in -P$. This shows that $K^\times=P\cup -P$. Finally, suppose that $a\in P\cap -P$. Then $[a]_T\in Q$ and $-[a]_T=[-a]_T\in Q$. This contradicts the fact that $Q\cap -Q=\emptyset$. Hence, $P\cap -P=\emptyset$. We have defined a map $\mathcal{X}(K_T)\to\mathcal{X}(K|T)$. In order to show that this map is injective let $Q_1\neq Q_2$ be orderings of $K_T$. For $[a]_T\in Q_1\setminus Q_2$ we have that $a\in P_1\setminus P_2$. Thus, $P_1\neq P_2$. This proves that $|\mathcal{X}(K_T)|\leq |\mathcal{X}(K|T)|$.
\end{proof}

\begin{example}
Consider the sign hyperfield $\R_{(\R^{\times})^2}$ introduced in Example \ref{sign}. The set $(\R^{\times})^2$ is the unique ordering of $\R$.  Therefore, by the previous theorem, the sign hyperfield is real with only one ordering $P_{(\R^{\times})^2} = \lbrace 1 \rbrace$ by Lemma
\ref{cr0}.
\end{example}

\begin{example} \label{naex}
Consider a non-archimedean real closed field $R$ with a natural valuation $v$ associated with its unique ordering $<$.  Let $\mathcal O_v$ be the valuation ring
of $v$ and 
let $E^+(R)$ be the set of positive units of $\mathcal O_v$. The set $E^+(R)$  is a subgroup of $R^{\times}$. Now, consider the factor hyperfield $F:=R_{E^+(R)}$. 
Observe that for $a, b \in R^{\times}$ we have
\[
[a]_{E^+(R)} = [b]_{E^+(R)} \iff \sgn(a)=\sgn(b) \wedge v(a)=v(b),
\]
where $\sgn(a)$ is a sign of $a$ in the unique ordering of $R$.
Therefore, we can identify the elements of $F^{\times}$ as
\[
[a]_{E^+(R)}=\bigl(\sgn(a),v(a)\bigr).
\]
Using the definition of multiplication in a factor hyperring we obtain
\[
\bigl(\sgn(a),v(a)\bigr)\cdot\bigl(\sgn(b),v(b)\bigr)=\bigl(\sgn(a)\cdot\sgn(b),v(a)+v(b)\bigr).
\] 
The result of multiplication by $0$ is obvious.
Let $\Gamma$ be the value group of $v$. Using the definition of the hyperaddition in a factor hyperfield, we compute 
\begin{align}\label{sgamma}
(1,\gamma_1)+(1,\gamma_2)&=\{(1,\min(\gamma_1,\gamma_2))\}&\gamma_1,\gamma_2\in\Gamma\nonumber\\
(-1,\gamma_1)+(-1,\gamma_2)&=\{(-1,\min(\gamma_1,\gamma_2))\}&\gamma_1,\gamma_2\in\Gamma\nonumber\\
(1,\gamma_1)+(-1,\gamma_2)&=\{(1,\gamma_1)\}&\gamma_1<\gamma_2\in\Gamma\\
(1,\gamma_1)+(-1,\gamma_2)&=\{(-1,\gamma_2)\}&\gamma_1>\gamma_2\in\Gamma\nonumber\\
(1,\gamma)+(-1,\gamma)&=\{(e,\delta)\mid e\in\{-1,1\},\delta\geq\gamma\}\cup\{[0]\}&\gamma\in\Gamma\nonumber
\end{align}
The last equality requires more explanation. Take $a \in R$ with $v(a) = \gamma$. We have
\[
[a]_{E^+(R)} + [-a]_{E^+(R)}=\bigl\{ [as-at]_{E^+(R)} : s, t \in E^+(R) \bigr\}. 
\]
We have that $v(s) = v(t) = 0$ and $\sgn(s) = \sgn(t) = 1$. Therefore,
$v(as-at) = v(a)+v(s-t) \geq v(a)+\min\{v(s), v(t)\} = v(a).$   Take $\delta \geq v(a) = \gamma$.
We have $\delta  = v(b)$ for some $ b\in R$. We can choose $b$ in such way that $ba^{-1}>0$
Then we have $v(ba^{-1})\geq 0$ and 
$1+ba^{-1} \in E^+(R)$. Taking $s = 1+ba^{-1}$ and $t=1$, we obtain
$v(as-at) = v(b) = \delta$. Note that $\sgn(as-at)= -\sgn(at-as)$. This finish the explanation of  the last equality 
in (\ref{sgamma}).
The additive inverse of the element $(e,\gamma)\in F^{\times}$ is  $(-e,\gamma)$. Clearly, the element $(1,0)$ is the neutral element for multiplication. Finally, the multiplicative inverse for $(e,\gamma) \in F^{\times}$ is  $(e, -\gamma)$.  \par
Since $E^+(R)$ is a subgroup of positive elements of $R$, we obtain by Theorem \ref{cr} that $F$ is a real hyperfield.  Moreover, by the construction given in the proof of Theorem \ref{cr} we obtain that $P = \lbrace (s, \gamma) \mid s = 1, \gamma\in \Gamma \rbrace$ is the unique ordering of $F$. \par
Consider the subset $S:=\lbrace (-1,0), 0, (1,0) \rbrace \subseteq F$. Since $R$ is non-archimedean ordered we have that $S\subsetneq F$. For $a,b\in S$, we have that $ab\in S$ and by easy computation one checks that $a+_S b:=(a+b)\cap S$ yields on $(S,+_S,\cdot,0)$ the structure of a hyperfield. Thus, $S$ is a subhyperring of $F$. Note that $S$ is not a strict subhyperring of $F$ since $(1,0)+(-1,0)\supsetneq S$.\par
Finally, let $H$ be the sign hyperfield. Consider the map $\varphi:H \to S$ defined as follows: $1 \mapsto (1,0), -1\mapsto (-1,0)$ and $0 \mapsto 0$. It is easy to check that $\varphi$ is a strict, bijective hyperring homomorphism. 
Thus $F$ properly contains a subhyperfield isomorphic to the sign hyperfield.\par

\end{example}
\section{Valued hyperfields}
The hyperfields investigated in 1956 by Krasner (\cite{Krasner56}) were factor hyperfields of a valued field $(K,v)$ by a subgroup of $K^\times$ of the form $1+\mathcal{M}^{\gamma}$, where $\gamma$ is a non-negative element of the value group $v(K^\times)$ and $\mathcal{M}^{\gamma} = \{a \in K \mid  v(a) > \gamma\}$. The definition of a valuation
on these particular hyperfields is given in \cite{Krasner56} and repeated  in \cite{Tolliver} and \cite{Lee}.
Since natural valuations are important  part of the theory of valued fields, it is crucial that their generalization to hyperfields is a part of the theory of valued hyperfields.  Therefore,  we adopt from \cite{DeS} the more general definition of a valued hyperfield.
\begin{df}\label{hyperval}
 Take a hyperfield $F$ and an ordered abelian group $\Gamma$ (written additively). 
 The map $v:  F \rightarrow \Gamma \cup \{ \infty \}$ is called a \emph{valuation on $F$} if it has the following properties:\\
 (V1) $v(a) = \infty $ iff $a = 0$;\\
 (V2) $v(ab) = v(a)+ v(b)$;\\
 (V3) $c \in a+b \Rightarrow v(c) \ge \min\{v(a),v(b)\}.$
\end{df}
Observe that the definition above is an obvious generalization 
of the definition of field valuation. 
\begin{co}
Take a valued field $(K,v)$  with the valuation ring $\mathcal O_v$ and let $T$ be a subgroup of the group of units $\mathcal O_v^\times$. Take the factor hyperfield $F = K_T$. Then 
the map $v_T: [a]_T\mapsto v(a)$ is  a valuation of $F$ with  value group $v(K^\times)$.
 \end{co}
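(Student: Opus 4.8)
The plan is to check directly that $v_T$ satisfies (V1)--(V3) of Definition \ref{hyperval} with the ordered abelian group $\Gamma$ taken to be the value group $v(K^\times)$ of $v$, after first establishing that $v_T$ is well defined. The only place where the hypothesis $T\subseteq\mathcal O_v^\times$ enters is precisely this well-definedness; once that is in hand, everything else is a routine transfer of the corresponding property of $v$, so I do not expect a genuine obstacle --- the content of the statement is that $v$ being constant (equal to $0$) on $T$ is exactly what is needed for the assignment $[a]_T\mapsto v(a)$ to descend to $K_T$.

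First I would prove $v_T$ is well defined. Suppose $[a]_T=[b]_T$. By Equation \ref{factor} there are $u,t\in T$ with $au=bt$. Since $T\subseteq\mathcal O_v^\times$ we have $v(u)=v(t)=0$, so using (V2) for $v$,
\[
v(a)=v(a)+v(u)=v(au)=v(bt)=v(b)+v(t)=v(b).
\]
Hence $[a]_T\mapsto v(a)$ is a well-defined map $F\to v(K^\times)\cup\{\infty\}$. Next, for (V1): $v_T([a]_T)=\infty$ iff $v(a)=\infty$ iff $a=0$ iff $[a]_T=[0]_T=0_F$. For (V2): by the definition of multiplication in a factor hyperfield, $v_T([a]_T\cdot[b]_T)=v_T([ab]_T)=v(ab)=v(a)+v(b)=v_T([a]_T)+v_T([b]_T)$.

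For (V3), suppose $[c]_T\in[a]_T+[b]_T$. By the definition of the hyperaddition in a factor hyperfield, $[c]_T=[at+bu]_T$ for some $t,u\in T$, so by well-definedness $v(c)=v(at+bu)$; since $v$ is a valuation on $K$ and $v(t)=v(u)=0$,
\[
v(c)=v(at+bu)\ge\min\{v(at),v(bu)\}=\min\{v(a)+v(t),\,v(b)+v(u)\}=\min\{v(a),v(b)\},
\]
i.e.\ $v_T([c]_T)\ge\min\{v_T([a]_T),v_T([b]_T)\}$. Finally, the value group of $v_T$ is $v_T(F^\times)=\{v(a)\mid a\in K^\times\}=v(K^\times)$, since every nonzero element of $F$ is of the form $[a]_T$ with $a\in K^\times$ and $v_T([a]_T)=v(a)$. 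This completes the verification.
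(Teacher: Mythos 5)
Your proof is correct and follows the same route as the paper's (very terse) argument: well-definedness from $v$ being zero on $T\subseteq\mathcal O_v^\times$, and then (V1)--(V3) transferred directly from the corresponding properties of $v$ via the definitions of the factor hyperfield operations. You have merely written out in full the details the paper leaves to the reader.
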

\begin{proof}
 The map $v_T$ is well-defined, since $v(a) = 0$ for every $a\in T$. The properties (V1)-(V3) follow from the corresponding properties of $v$.
\end{proof}
In the case of the corollary above we say that the valuation is \emph{ induced on $F$ by $v$}.
The definition of a valuation of a hyperfield  presented  in \cite{Tolliver} and \cite{Lee}, which introduces  two additional axioms, is too restrictive as it is shown in the following example.

\begin{example}
 Take a rational function field $\R(X)$ with valuation $v$ such that $v(X) = 1$. Consider the factor hyperfield $F: = \R(X)_{\R^{\times}}$ with the valuation induced by $v$. Note that $[1]_{\R^{\times}} \neq [X-1]_{\R^{\times}}$, since
 $\frac{1}{X-1} \notin \R^{\times}$. We have that $X$ and $X-2$ are in
 $1\R^{\times} + (X-1)\R^{\times}$. Therefore, $[X]_{\R^{\times}}$, $[X-2]_{\R^{\times}} \in [1]_{\R^{\times}} + [X-1]_{\R^{\times}}$. We have 
 $v([X]_{\R^{\times}}) = 1$ and $v([X-2]_{\R^{\times}}) = 0$ which shows that 
 $v([1]_{\R^{\times}} + [X-1]_{\R^{\times}})$ is not a singleton as  required by the fourth axiom  of \cite[Definition 1.4]{Tolliver}. 
 Moreover, we have that $[1]_{\R^{\times}} + [-1]_{\R^{\times}} = \R^{\times}$ is not a ball in the ultrametric generated by $v$ as  required by the fifth axiom  of \cite[Definition 1.4]{Tolliver}.
\end{example}

\begin{example}\label{hypervalex}
Consider the factor hyperfield  $F=R_{E^+(R)}$ from Example \ref{naex}. The group $E^+(R)$ is a subgroup of $\mathcal O_v ^{\times}$ for the natural valuation $v$ of $R$, therefore $v$ induces on $F$ a valuation (denoted here also by $v$). We obtain $v(0)=\infty$ and for $(s,\gamma)\in F^{\times}$, 
\[
v((s,\gamma))=\gamma.
\]
\end{example}
\begin{lem}\label{valring}
Let $v:F\to\Gamma\cup\{\infty\}$ be a valuation on a hyperfield $F$. Then:
\begin{enumerate}
 \item $v(1)=v(-1) = 0$,
 \item $v(-a)=v(a)$ for all $a\in F$,
 \item $v(a^{-1})=-v(a)$ for all $a\in F$,
 \item if $v(a)\neq v(b)$, then for every $c \in a+b$, 
 $v(c) = \min\{v(a), v(b)\}$.
\end{enumerate}
\end{lem}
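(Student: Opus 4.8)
The plan is to deduce all four statements from the valuation axioms (V1)--(V3), using the hypergroup/hyperring axioms exactly where the classical field proof uses the group/ring axioms; the only genuinely new ingredient is axiom (H4), and it enters only in part (4).

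First, for (1): since $1\neq 0$, axiom (V1) gives $v(1)\in\Gamma$, and applying (V2) to $1=1\cdot 1$ yields $v(1)=v(1)+v(1)$, whence $v(1)=0$ because $\Gamma$, being ordered, is torsion-free. To handle $v(-1)$ I would first record the auxiliary identities $(-1)\cdot x=-x$ for all $x\in F$ and, in particular, $(-1)^2=1$. Indeed, $0\in 1+(-1)$ by (H3); multiplying by $x$ and using distributivity (R3) gives $0=x\cdot 0\in x+(-1)x$, so the uniqueness clause in (H3) forces $(-1)x=-x$; taking $x=-1$ gives $(-1)^2=-(-1)=1$. Then $0=v(1)=v\bigl((-1)^2\bigr)=v(-1)+v(-1)$ gives $v(-1)=0$, again by torsion-freeness.

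Parts (2) and (3) are then immediate: $v(-a)=v\bigl((-1)a\bigr)=v(-1)+v(a)=v(a)$ by (1) and (V2); and for $a\neq 0$ (the only case in which $a^{-1}$ is defined), $0=v(1)=v(aa^{-1})=v(a)+v(a^{-1})$, so $v(a^{-1})=-v(a)$.

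The substantive point is (4). Assume without loss of generality that $v(a)<v(b)$ and take $c\in a+b$. Axiom (V3) already gives $v(c)\geq\min\{v(a),v(b)\}=v(a)$, so it remains to prove $v(c)\leq v(a)$. Here I use (H4): since $c\in a+b=b+a$ by (H2), axiom (H4) gives $a\in c-b=c+(-b)$; applying (V3) to this membership together with part (2) yields $v(a)\geq\min\{v(c),v(b)\}$. As $v(b)>v(a)$, this minimum cannot be $v(b)$, so it equals $v(c)$ and hence $v(a)\geq v(c)$. Combining with the reverse inequality from (V3) gives $v(c)=v(a)=\min\{v(a),v(b)\}$, as desired. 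The only step that departs from the classical argument is this use of (H4) in (4); everything else is a routine transcription once the identity $(-1)x=-x$ is in place, so I expect no real obstacle beyond being careful with that identity and with the appeal to (H2)/(H4).
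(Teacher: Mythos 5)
Your proof is correct and takes essentially the same route the paper does: the paper simply remarks that (1)--(3) follow as for field valuations and cites \cite[Lemma 4.3]{DeS} for (4), and your argument is exactly that classical proof carried over, with the identity $(-1)x=-x$ and the use of (H4) in part (4) being the standard adaptations. (Minor remark: for $v(1)+v(1)=v(1)$ plain group cancellation already suffices; torsion-freeness is only needed for $2v(-1)=0$.)
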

\begin{proof}
Proof of (1)-(3) is the same as in the case of field valuation. The proof of (4) can be found in \cite[Lemma 4.3]{DeS}.
\end{proof}

\begin{df}
Let $F$ be a hyperfield. A subhyperring $\mathcal{O}$ of $F$ is called a \emph{valuation hyperring} if for all $x\in F$ we have that either $x\in \mathcal{O}$ or $x^{-1}\in\mathcal{O}$.
\end{df}
From the definition above it follows that a valuation hyperring is a  commutative hyperring with unity. 
The following proposition gives an important property of valuation hyperrings. 
\begin{pr}\label{valstrict}
A valuation hyperring $\mathcal{O}$ of a hyperfield $F$ is a strict subhyperring of $F$.
\end{pr}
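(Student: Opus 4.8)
The plan is to show that for a valuation hyperring $\mathcal{O}$ of a hyperfield $F$, the set $\mathcal{O}$ is closed under the hyperoperations of $F$ in the strict sense, i.e., $a - b \subseteq \mathcal{O}$ and $ab \in \mathcal{O}$ for all $a, b \in \mathcal{O}$. Multiplicative closure is immediate from the definition of a subhyperring, so the whole content is the statement $a - b \subseteq \mathcal{O}$, which by Lemma~\ref{valring}(2) is the same as $a + b \subseteq \mathcal{O}$ for all $a, b \in \mathcal{O}$.

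First I would recall that a valuation hyperring carries, by definition, the hyperaddition \emph{induced} from $F$, namely $a +_{\mathcal{O}} b = (a+b) \cap \mathcal{O}$, and that $(\mathcal{O}, +_{\mathcal{O}}, \cdot, 0)$ is required to be a hyperring. So the goal is to upgrade the intersection $(a+b)\cap\mathcal{O}$ to the full set $a+b$. The key tool should be a valuation-style argument on elements of $F$. The natural candidate for the relevant valuation is defined on $F^\times$ by $w(x) = 0$ if $x \in \mathcal{O}^\times$ (the units of $\mathcal{O}$), $w(x) > 0$ if $x \in \mathcal{O} \setminus (\mathcal{O}^\times \cup\{0\})$, and $w(x) < 0$ if $x \notin \mathcal{O}$; concretely one can take the value group to be $F^\times / \mathcal{O}^\times$ ordered by declaring $x\mathcal{O}^\times \geq 0$ iff $x \in \mathcal{O}$. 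The two things to check are that this is a total ordering on the quotient group — which follows from the defining property that $x \in \mathcal{O}$ or $x^{-1}\in\mathcal{O}$ for every $x$ — and that it satisfies (V3), i.e. $c \in a+b \Rightarrow w(c) \geq \min\{w(a), w(b)\}$. The latter is exactly where I expect the main obstacle to lie: one must extract from the hyperring axioms (H1)--(H4), distributivity (R3), and the subhyperring hypothesis on $\mathcal{O}$ that a sum of two elements of $\mathcal{O}$ stays in $\mathcal{O}$. A clean way: given $a, b \in F^\times$ with, say, $w(a) \le w(b)$, so $b/a \in \mathcal{O}$, and given $c \in a + b = a(1 + b/a)$ by (R3), it suffices to know $1 + b/a \subseteq \mathcal{O}$, which reduces the whole problem to showing $1 + t \subseteq \mathcal{O}$ for every $t \in \mathcal{O}$.

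So the crux becomes: \textbf{for $t \in \mathcal{O}$, prove $1 + t \subseteq \mathcal{O}$.} Suppose $s \in 1 + t$ but $s \notin \mathcal{O}$; then $s^{-1} \in \mathcal{O}$ and in fact $s^{-1} \in \mathcal{O}$ is not a unit (else $s \in \mathcal{O}$), so $s^{-1}$ lies in the complement of $\mathcal{O}^\times$. From $s \in 1+t$ and (H4) we get $1 \in s - t$, hence multiplying by $s^{-1}$ and using (R3), $s^{-1} \in 1 \cdot s^{-1} \subseteq (s - t)s^{-1} = 1 + (-t s^{-1})$, i.e. there is some element expressing $s^{-1} - 1 \ni -t s^{-1}$ type relation; rearranging via (H4) again, $-1 \in (-ts^{-1}) - s^{-1}$... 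The precise bookkeeping is fiddly, but the idea is that from $s\notin\mathcal O$ one derives $1 \in (\text{stuff in }\mathcal{O})$, forcing a contradiction with $\mathcal{O}$ being a proper subhyperring containing no contradiction-producing relation, or more directly one shows $s^{-1}(1+t) \ni 1$ together with $s^{-1}, s^{-1}t \in \mathcal{O}$ so $1 \in s^{-1} + s^{-1}t \subseteq \mathcal{O} +_{\mathcal O} \mathcal{O}$ — but this last set is computed \emph{inside} $\mathcal{O}$ by the subhyperring structure, and one needs $s^{-1}+s^{-1}t$ (the $F$-sum) to contain $1$, which it does, and then $1 \in \mathcal{O}$ trivially; the contradiction comes from the additive/inverse analysis showing $s$ itself must then be in $\mathcal{O}$. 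Once $1 + t \subseteq \mathcal{O}$ is established, the general case $a + b \subseteq \mathcal{O}$ follows by the scaling argument above (handling $a = 0$ or $b = 0$ trivially and using Lemma~\ref{valring}(3) for the inverse), and strictness is proved.
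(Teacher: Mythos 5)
Your reduction is sound and close in spirit to the paper's: multiplicative closure is immediate, $a+b=a(1+ba^{-1})$ with $ba^{-1}\in\mathcal{O}$ after possibly swapping $a$ and $b$, so everything hinges on showing $1+t\subseteq\mathcal{O}$ for $t\in\mathcal{O}$. (The detour through a valuation with value group $F^\times/\mathcal{O}^\times$ is circular, though: verifying (V3) for that map \emph{is} the statement to be proved --- indeed the paper's construction of that valuation, in the last proposition of Section 4, cites Proposition \ref{valstrict} --- so it is good that you abandon it.) The problem is that the crux, namely that $s\in 1+t$ together with $s\notin\mathcal{O}$ leads to a contradiction, is never actually carried out. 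You correctly obtain $1=ss^{-1}\in(1+t)s^{-1}=s^{-1}+ts^{-1}$ with $s^{-1},ts^{-1}\in\mathcal{O}$, but the claimed inclusion $s^{-1}+ts^{-1}\subseteq\mathcal{O}+_{\mathcal{O}}\mathcal{O}$ of the $F$-sum is exactly what cannot be assumed (it is an instance of the proposition itself), the observation that ``then $1\in\mathcal{O}$ trivially'' yields no contradiction, and the final sentence (``the contradiction comes from the additive/inverse analysis showing $s$ itself must then be in $\mathcal{O}$'') asserts the conclusion instead of deriving it.

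The missing idea --- and the whole point of the paper's proof --- is to exploit the fact that $(\mathcal{O},+_{\mathcal{O}},\cdot,0)$ is itself a hyperring (this is what Definition \ref{subh} of a subhyperring demands), so that the axioms (H4) and (R3) may be applied to the \emph{induced} hyperaddition. Concretely: since $1\in\mathcal{O}$, from $1\in s^{-1}+ts^{-1}$ you get $1\in(s^{-1}+ts^{-1})\cap\mathcal{O}=s^{-1}+_{\mathcal{O}}ts^{-1}$, and by (R3) in $\mathcal{O}$ this set equals $(1+_{\mathcal{O}}t)s^{-1}$; hence $1=cs^{-1}$ for some $c\in 1+_{\mathcal{O}}t$, i.e.\ $s=c\in 1+_{\mathcal{O}}t\subseteq\mathcal{O}$, contradicting $s\notin\mathcal{O}$. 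The paper runs this same computation without your normalization: for $x\in a-b$ with $x\notin\mathcal{O}$ it places $ax^{-1}\in(1+bx^{-1})\cap\mathcal{O}=1+_{\mathcal{O}}bx^{-1}$ and then uses (H4) and (R3) inside $\mathcal{O}$ to conclude $x\in a+_{\mathcal{O}}(-b)\subseteq\mathcal{O}$. Until that step is written down, your argument has a genuine hole precisely at the one place where the hypothesis ``subhyperring'' (as opposed to mere subset closed under multiplication) does any work.
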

\begin{proof}
It suffices to show that $a-b\subseteq \mathcal{O}$ for all $a,b\in\mathcal{O}$. Take $a,b\in\mathcal{O}$ and $x\in a-b$. If $x\in \mathcal{O}$, then there is nothing to show. Otherwise, we have that $x^{-1}\in\mathcal{O}$ and thus $ax^{-1},bx^{-1}\in\mathcal{O}$. Since $x\in a-b$ we obtain from (H4) that $a\in x+b$, so, using axiom (R3),
\[
ax^{-1}\in(x+b)x^{-1}=1+bx^{-1}.
\]
We have obtained that $ax^{-1}\in (1+bx^{-1})\cap\mathcal{O}=1+_{\mathcal{O}}bx^{-1}$. By (H4) and (R3) applied to the hyperring $(\mathcal{O},+_\mathcal{O},\cdot,0)$, it follows that $xx^{-1}=1\in ax^{-1}+_\mathcal{O}(-bx^{-1})=(a+_\mathcal{O}(-b))x^{-1}$. Therefore, $x\in a+_\mathcal{O}(-b)\subseteq\mathcal{O}$. This shows that $a-b\subseteq\mathcal{O}$. 
\end{proof}
We denote by 
$\mathcal{O}^\times$ the set of units of a valuation hyperring $\mathcal O$ i.e.,
\[
\mathcal{O}^\times:=\{x\in\mathcal{O}\mid x^{-1}\in\mathcal{O}\}.
\]
\begin{lem}\label{maxideal}
Let $\mathcal{O}$ be a valuation hyperring in a hyperfield $F$. Then $\mathcal{M}:=\mathcal{O}\setminus\mathcal{O}^\times$ is the unique maximal hyperideal of $\mathcal{O}$.
\end{lem}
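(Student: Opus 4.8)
The plan is to mimic the classical proof that the set of non-units in a valuation ring forms its unique maximal ideal, using the strictness of $\mathcal{O}$ established in Proposition \ref{valstrict} to handle hyperaddition. First I would verify that $\mathcal M = \mathcal O\setminus\mathcal O^\times$ is a hyperideal of $\mathcal O$. The absorption property $r\cdot x\in\mathcal M$ for $r\in\mathcal O$, $x\in\mathcal M$ is the easy part: if $rx$ were a unit, then $x$ would have inverse $r(rx)^{-1}\in\mathcal O$, contradicting $x\notin\mathcal O^\times$; and $rx\in\mathcal O$ since $\mathcal O$ is multiplicatively closed. The point requiring care is that $\mathcal M$ is a strict subhyperring, i.e. $a-b\subseteq\mathcal M$ and $ab\in\mathcal M$ for all $a,b\in\mathcal M$ — the multiplicative closure again follows from absorption, so the real content is closure under the (strict, by Proposition \ref{valstrict}) hyperaddition.

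To see $a-b\subseteq\mathcal M$ for $a,b\in\mathcal M$, fix $c\in a-b$. By Proposition \ref{valstrict} we already know $c\in\mathcal O$, so I only need to rule out $c\in\mathcal O^\times$. Suppose for contradiction $c^{-1}\in\mathcal O$. From $c\in a-b$ and axiom (H4) we get $a\in c+b$; multiplying by $c^{-1}\in\mathcal O$ and using distributivity (R3) gives $ac^{-1}\in 1+bc^{-1}$. Here is where I would argue that $ac^{-1}$ and $bc^{-1}$ both lie in $\mathcal M$: indeed $a,b\in\mathcal M$ and $c^{-1}\in\mathcal O$, so by the absorption property just proved, $ac^{-1},bc^{-1}\in\mathcal M$. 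Now I want to derive that $1\in\mathcal M$, which contradicts $1\in\mathcal O^\times$. For this I use strictness of $\mathcal M$ in the hyperaddition restricted to $\mathcal O$: since $ac^{-1}\in 1+bc^{-1}$ and also $ac^{-1}\in\mathcal O$ (as $a\in\mathcal O$, $c^{-1}\in\mathcal O$), we have $ac^{-1}\in(1+bc^{-1})\cap\mathcal O = 1+_{\mathcal O}bc^{-1}$; then (H4) in $(\mathcal O,+_{\mathcal O},\cdot,0)$ yields $1\in ac^{-1}+_{\mathcal O}(-bc^{-1})\subseteq ac^{-1}-bc^{-1}$, and since $ac^{-1},bc^{-1}\in\mathcal M$, if we already knew $\mathcal M$ closed under subtraction we'd be done — but that is circular.

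To break the circularity I would instead argue directly at the level of values, or more elementarily as follows: one cannot have $1\in x+y$ with both $x,y\in\mathcal M$ and $\mathcal M$ the non-units. Suppose $1\in x+y$, $x,y\in\mathcal M$. If $x=0$ then $1\in y$, forcing $y=1\in\mathcal O^\times$, contradiction (and symmetrically for $y=0$); so assume $x,y\neq 0$. Since $\mathcal O$ is a valuation hyperring, among $xy^{-1}$ and $x^{-1}y$ one lies in $\mathcal O$, say $xy^{-1}\in\mathcal O$ (the other case is symmetric). Then from $1\in x+y$ and (H4), $x\in 1-y$, hence $xy^{-1}\in y^{-1}-1$ by (R3). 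But $y\in\mathcal M$ means $y^{-1}\notin\mathcal O$; I claim this forces $y^{-1}-1\subseteq F\setminus\mathcal O$ up to the sole exception already handled — more cleanly: $xy^{-1}\in\mathcal O$ and $xy^{-1}\in y^{-1}-1$ give, by strictness of $\mathcal O$, that $y^{-1}\in xy^{-1}+1\subseteq\mathcal O$ (both summands in $\mathcal O$, intersection taken inside $\mathcal O$ via $+_{\mathcal O}$), contradicting $y\notin\mathcal O^\times$. Hence $1\notin x+y$, completing the proof that $\mathcal M$ is a hyperideal.

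Finally I would establish maximality and uniqueness. Since $1\notin\mathcal M$ we have $\mathcal M\subsetneq\mathcal O$. If $J$ is any hyperideal of $\mathcal O$ with $\mathcal M\subsetneq J$, pick $x\in J\setminus\mathcal M$; then $x\in\mathcal O^\times$, so $J$ contains a unit, whence $J=\mathcal O$ by Lemma \ref{easy}. This shows $\mathcal M$ is maximal. For uniqueness, if $J$ is any maximal hyperideal of $\mathcal O$, then $J$ contains no unit (else $J=\mathcal O$ by Lemma \ref{easy}), so $J\subseteq\mathcal O\setminus\mathcal O^\times=\mathcal M$; maximality of $J$ then forces $J=\mathcal M$. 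The main obstacle I anticipate is exactly the one flagged above: proving closure of $\mathcal M$ under hyperaddition without circularity, which is why I route it through the dichotomy of the valuation hyperring and the strictness of $\mathcal O$ rather than through an assumed strictness of $\mathcal M$.
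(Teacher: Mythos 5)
Your proof is correct, and it is built from the same three ingredients as the paper's: the absorption argument for non-units, the valuation dichotomy ($ab^{-1}\in\mathcal{O}$ or $ba^{-1}\in\mathcal{O}$), and the strictness of $\mathcal{O}$ from Proposition \ref{valstrict}. The assembly is different, though. The paper closes $\mathcal{M}$ under hyperaddition in one line at the level of sets: taking $ab^{-1}\in\mathcal{O}$ without loss of generality, strictness gives $1-ab^{-1}\subseteq\mathcal{O}$, and then $b-a=b(1-ab^{-1})\subseteq b\cdot\mathcal{O}\subseteq\mathcal{M}$ by the absorption property already proved. Your route is element-wise and by contradiction: you fix $c\in a-b$, assume $c$ is a unit, reduce to a relation $1\in x+y$ with $x,y\in\mathcal{M}$, and then prove the auxiliary claim that $1$ can never lie in a hypersum of two non-units (which is where you redeploy the dichotomy and strictness, via $y^{-1}\in xy^{-1}+1\subseteq\mathcal{O}$). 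You correctly diagnosed and avoided the circularity in your first attempt. What your detour buys is the auxiliary claim itself, a reusable fact slightly stronger than what the lemma needs; what the paper's computation buys is brevity — distributing $b$ over $1-ab^{-1}$ converts the additive closure problem directly into an instance of absorption, with no case analysis and no contradiction. The maximality and uniqueness arguments at the end coincide with the paper's.
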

\begin{proof}
Take $a\in\mathcal{M}$ and $c\in \mathcal{O}$. If $ca$ is invertible in $\mathcal{O}$, then there exists $x\in\mathcal{O}$ such that $x(ca)=1$. Hence $(xc)a=1$ and $a^{-1}=xc\in\mathcal{O}$ contradicting $a\in\mathcal{M}$. This proves that $ac \in \mathcal{M}$.\par
Take $a,b\in\mathcal{M}$. We may assume that $ab^{-1}\in\mathcal{O}$ (otherwise $ba^{-1}\in\mathcal{O}$ and we can interchange the roles of $a$ and $b$). Since $\mathcal{O}$ is a strict hyperring we obtain that $1-ab^{-1}\subseteq \mathcal{O}$ and therefore, using what we just proved above, $b-a =b(1-ab^{-1})\subseteq \mathcal{M}$. \par
We have shown that $\mathcal{M}$ is a hyperideal of $\mathcal{O}$. Since $\mathcal{O}\setminus\mathcal{M}$ contains only units of $\mathcal{O}$, by Lemma \ref{easy} every proper hyperideal of $\mathcal{O}$ must be contained in $\mathcal{M}$, showing that $\mathcal{M}$ is the unique maximal hyperideal of $\mathcal{O}$.
\end{proof}
 \begin{df}
Let $F$ be a hyperfield, $\mathcal{O}$ a valuation hyperring in $F$ and $\mathcal{M}$ its unique maximal hyperideal. By Proposition \ref{prmaxid}, the quotient hyperring $\overline{F}:=\mathcal{O} / \mathcal{M}$ is a hyperfield, called the \emph{residue hyperfield}.
\end{df}

The following lemma, which describes the relation between valuation subhyperrings and their maximal ideals is well known for valuation rings.
Since the proof for valuation subhyperrings is exactly the same, we will skip it.

\begin{lem}\label{incl}
Take a hyperfield $F$ and two valuation hyperrings $\mathcal{O}_1$, $\mathcal{O}_2$ in $F$. Let $\mathcal{M}_i$ be the maximal hyperideal of $\mathcal{O}_i$ for $i=1,2$. Then
\[
\mathcal{O}_1\subseteq\mathcal{O}_2\Longleftrightarrow\mathcal{M}_2\subseteq\mathcal{M}_1.
\]
\end{lem}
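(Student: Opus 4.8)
The plan is to prove both implications directly, relying only on the defining property of a valuation hyperring --- for every $x\in F$ either $x\in\mathcal{O}$ or $x^{-1}\in\mathcal{O}$ --- together with the description $\mathcal{M}_i=\mathcal{O}_i\setminus\mathcal{O}_i^\times$ furnished by Lemma \ref{maxideal}. No hyperaddition arguments are needed: the whole statement is about the multiplicative structure, so the proof is the verbatim classical one for valuation rings.

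For the implication $\mathcal{O}_1\subseteq\mathcal{O}_2\Rightarrow\mathcal{M}_2\subseteq\mathcal{M}_1$, I would take $x\in\mathcal{M}_2$. Since $x$ is a non-unit of $\mathcal{O}_2$, we have $x^{-1}\notin\mathcal{O}_2$, hence $x^{-1}\notin\mathcal{O}_1$ by the assumed inclusion. Because $\mathcal{O}_1$ is a valuation hyperring, this forces $x\in\mathcal{O}_1$; and $x$ cannot be a unit of $\mathcal{O}_1$, for otherwise $x^{-1}\in\mathcal{O}_1$. Thus $x\in\mathcal{O}_1\setminus\mathcal{O}_1^\times=\mathcal{M}_1$, which gives $\mathcal{M}_2\subseteq\mathcal{M}_1$.

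For the converse $\mathcal{M}_2\subseteq\mathcal{M}_1\Rightarrow\mathcal{O}_1\subseteq\mathcal{O}_2$, I would take $x\in\mathcal{O}_1$ and argue by contradiction, assuming $x\notin\mathcal{O}_2$. Then $x^{-1}\in\mathcal{O}_2$, and $x^{-1}$ is not a unit of $\mathcal{O}_2$ (else $x\in\mathcal{O}_2$), so $x^{-1}\in\mathcal{M}_2\subseteq\mathcal{M}_1\subseteq\mathcal{O}_1$. But then $x$ and $x^{-1}$ both lie in $\mathcal{O}_1$, i.e. $x\in\mathcal{O}_1^\times$, and hence also $x^{-1}\in\mathcal{O}_1^\times$, contradicting $x^{-1}\in\mathcal{M}_1=\mathcal{O}_1\setminus\mathcal{O}_1^\times$. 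Therefore $x\in\mathcal{O}_2$.

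The argument is entirely formal, and I do not anticipate a genuine obstacle: none of the steps uses additivity, so nothing hyperfield-specific can go wrong. The only point requiring a little care is the consistent use of Lemma \ref{maxideal} for the identification $\mathcal{M}_i=\mathcal{O}_i\setminus\mathcal{O}_i^\times$, and the (already recorded) fact that $1\in\mathcal{O}_i$, so that ``unit of $\mathcal{O}_i$'' and ``invertible element of $F$ lying in $\mathcal{O}_i$'' mean the same thing.
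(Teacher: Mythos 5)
Your proof is correct and is exactly the classical valuation-ring argument that the paper itself invokes (the authors skip the proof, stating it is "exactly the same" as the well-known one for valuation rings). The only point you elide is the trivial case $x=0$ in each direction, which causes no difficulty since $0$ lies in every hyperideal and is never a unit.
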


\begin{pr}
Let $v$ be a valuation on a hyperfield $F$. Then $\mathcal O_v: = \lbrace x \in F \mid v(x) \geq 0 \rbrace$ is a valuation hyperring of $F$ and $\mathcal M_v := \lbrace x \in F \mid v(x) > 0 \rbrace$ its unique maximal hyperideal.
\end{pr}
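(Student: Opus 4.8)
The plan is to verify the two defining properties directly from the axioms (V1)--(V3) and the elementary facts collected in Lemma~\ref{valring}, and then to obtain the maximal hyperideal for free from Lemma~\ref{maxideal}.

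First I would show that $\mathcal{O}_v$ is in fact a \emph{strict} subhyperring of $F$ in the sense of Definition~\ref{subh}; by the remark following that definition it is then a subhyperring, and hence (together with $1\in\mathcal{O}_v$ by Lemma~\ref{valring}(1)) a hyperring with unity. Multiplicative closure is immediate: if $v(a),v(b)\ge 0$ then $v(ab)=v(a)+v(b)\ge 0$ by (V2), and $0\in\mathcal{O}_v$ by (V1). For closure under subtraction, take $a,b\in\mathcal{O}_v$ and $c\in a-b=a+(-b)$; since $v(-b)=v(b)\ge 0$ by Lemma~\ref{valring}(2), axiom (V3) gives $v(c)\ge\min\{v(a),v(-b)\}\ge 0$, so $c\in\mathcal{O}_v$. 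Thus $a-b\subseteq\mathcal{O}_v$ and $ab\in\mathcal{O}_v$, which is precisely the definition of a strict subhyperring.

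Next I would check the defining property of a valuation hyperring: for every $x\in F$, either $x\in\mathcal{O}_v$ or $x^{-1}\in\mathcal{O}_v$. This is trivial for $x=0$. If $x\neq 0$, then $v(x^{-1})=-v(x)$ by Lemma~\ref{valring}(3); hence if $v(x)<0$ then $v(x^{-1})=-v(x)>0$, so $x^{-1}\in\mathcal{O}_v$, and otherwise $x\in\mathcal{O}_v$. Therefore $\mathcal{O}_v$ is a valuation hyperring of $F$.

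Finally, to identify $\mathcal{M}_v$: by Lemma~\ref{maxideal} the set $\mathcal{O}_v\setminus\mathcal{O}_v^\times$ is the unique maximal hyperideal of $\mathcal{O}_v$, so it remains to show this set equals $\mathcal{M}_v$. An element $x$ lies in $\mathcal{O}_v^\times$ iff $x\in\mathcal{O}_v$ and $x^{-1}\in\mathcal{O}_v$, i.e. iff $v(x)\ge 0$ and $v(x^{-1})=-v(x)\ge 0$, i.e. iff $v(x)=0$; hence $\mathcal{O}_v\setminus\mathcal{O}_v^\times=\{x\in F\mid v(x)>0\}=\mathcal{M}_v$. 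As for the main obstacle: there is no deep difficulty here, the statement being a routine transcription of the classical field case. The only point deserving a moment's attention is that $\mathcal{O}_v$ is automatically a \emph{strict} subhyperring, so that one need not separately verify that the induced hyperaddition $(a+b)\cap\mathcal{O}_v$ turns $\mathcal{O}_v$ into a hypergroup — and this is exactly what (V3) combined with $v(-b)=v(b)$ delivers.
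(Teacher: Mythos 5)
Your proposal is correct and follows essentially the same route as the paper: closure of $\mathcal{O}_v$ under multiplication via (V2) and under hyperaddition of differences via (V3) together with $v(-b)=v(b)$, the valuation-hyperring property via $v(x^{-1})=-v(x)$, and the identification $\mathcal{M}_v=\mathcal{O}_v\setminus\mathcal{O}_v^\times$ combined with Lemma~\ref{maxideal}. Your explicit remark that the argument in fact yields a \emph{strict} subhyperring is a nice clarification of a point the paper leaves implicit, but it is not a different method.
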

\begin{proof}
We first prove that $\mathcal{O}_v$ is a subhyperring of $F$. Take $a,b\in \mathcal{O}_v$. Then for all $c\in a-b$ we have $v(c)\geq\min\{v(a),v(-b)\}=\min\{v(a),v(b)\}\geq 0$ so $a-b\subseteq \mathcal{O}_v$. From the condition (V2) of Definition \ref{hyperval} it follows that $ab\in\mathcal{O}_v$. By Lemma \ref{valring} we conclude that if $x\notin\mathcal{O}_v$, then $x^{-1}\in\mathcal{O}_v$, so $\mathcal{O}_v$ is a valuation hyperring in $F$.\par
Next we show that $\mathcal{M}_v$ is the unique maximal hyperideal of $\mathcal{O}_v$. Observe that, by virtue of Lemma \ref{valring}, 
\[
\mathcal{O}_v^\times=\{x\in\mathcal{O}_v\mid v(x)=0\}.
\]
Hence $\mathcal{M}_v=\mathcal{O}_v\setminus\mathcal{O}_v^\times$ and thus $\mathcal{M}_v$ is the unique maximal hyperideal of $\mathcal{O}_v$ by Lemma \ref{maxideal}.
\end{proof}

To conclude this section we show that, as in classical valuation theory, any valuation hyperring of a hyperfield $F$ induces a valuation on $F$.

\begin{pr}
Let $F$ be a hyperfield and $\mathcal{O}$ a valuation hyperring in $F$. Let $\mathcal{O}^\times$ be a group of units of $\mathcal{O}$.  Consider the group $\Gamma:=F^\times/\mathcal{O}^\times$ and define a relation $\leq$ on $\Gamma$ as follows
\[
a\mathcal{O}^\times\leq b\mathcal{O}^\times\iff ba^{-1}\in\mathcal{O}.
\]
Then $(\Gamma,\cdot,\leq)$ is an ordered abelian group and the canonical projection $\pi:F\to\Gamma\cup\{\infty\}$, extended so that $\pi(0)=\infty$, is a valuation on $F$. 
\end{pr}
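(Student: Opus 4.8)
The plan is to check, in order, that $\Gamma=F^\times/\mathcal O^\times$ is an abelian group, that $\leq$ is a well-defined total order on $\Gamma$ that is compatible with the group operation, and finally that $\pi$ satisfies axioms (V1)--(V3) of Definition \ref{hyperval}. Since the proposition writes $\Gamma$ multiplicatively whereas Definition \ref{hyperval} is phrased additively, I would first note that this is a harmless change of convention: (V2) is to be read as $\pi(ab)=\pi(a)\pi(b)$ and the $\min$ in (V3) is taken with respect to $\leq$.

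First I would observe that $\mathcal O^\times$ is a subgroup of the abelian group $F^\times$ (it is multiplicatively closed, contains $1$, and is closed under inversion by its very definition), so $\Gamma$ is an abelian group with identity $\mathcal O^\times=\pi(1)$. For well-definedness of $\leq$: if $a'=au$ and $b'=bw$ with $u,w\in\mathcal O^\times$, then $b'a'^{-1}=(ba^{-1})(wu^{-1})$ with $wu^{-1}\in\mathcal O^\times$, so $b'a'^{-1}\in\mathcal O$ iff $ba^{-1}\in\mathcal O$. Reflexivity is $aa^{-1}=1\in\mathcal O$; antisymmetry holds because $ba^{-1}\in\mathcal O$ and $ab^{-1}\in\mathcal O$ force $ba^{-1}\in\mathcal O^\times$, i.e.\ $a\mathcal O^\times=b\mathcal O^\times$; transitivity follows from $ca^{-1}=(cb^{-1})(ba^{-1})\in\mathcal O$ since $\mathcal O$ is multiplicatively closed; totality is exactly the defining property of a valuation hyperring applied to $ba^{-1}\in F^\times$; and translation invariance is immediate from $(bc)(ac)^{-1}=ba^{-1}$. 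This gives that $(\Gamma,\cdot,\leq)$ is an ordered abelian group.

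For the valuation axioms, (V1) holds by the definition $\pi(0)=\infty$, and (V2) holds because $\pi(ab)=ab\mathcal O^\times=(a\mathcal O^\times)(b\mathcal O^\times)$ for $a,b\in F^\times$, the degenerate cases being covered by the usual conventions for $\infty$. The substantial point is (V3). If $a=0$ then $a+b=\{b\}$ by the remark following Definition \ref{hypergp}, so $c=b$ and there is nothing to prove; the case $b=0$ is symmetric, and if $c=0$ then $\pi(c)=\infty$ dominates everything. So I would assume $a,b,c\in F^\times$ and, using $a+b=b+a$, take without loss of generality $\pi(a)\leq\pi(b)$, i.e.\ $ba^{-1}\in\mathcal O$; the goal is $\pi(c)\geq\pi(a)$, i.e.\ $ca^{-1}\in\mathcal O$. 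Multiplying $c\in a+b$ by $a^{-1}$ and using distributivity (R3) gives $ca^{-1}\in(a+b)a^{-1}=1+ba^{-1}$. Since $\mathcal O$ is a strict subhyperring of $F$ by Proposition \ref{valstrict}, from $1,ba^{-1}\in\mathcal O$ we get $-ba^{-1}\in\mathcal O$ and then $1+ba^{-1}=1-(-ba^{-1})\subseteq\mathcal O$, whence $ca^{-1}\in\mathcal O$, as required.

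I expect the only real obstacle to be this last step, the passage to $(a+b)a^{-1}=1+ba^{-1}$ and the conclusion $ca^{-1}\in\mathcal O$: this is precisely where one must use that $\mathcal O$ is not merely a subhyperring but a \emph{strict} one (Proposition \ref{valstrict}), together with distributivity (R3). The remaining verifications are routine and parallel the classical field case.
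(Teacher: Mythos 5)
Your proof is correct and follows essentially the same route as the paper: the same verification that $\leq$ is a well-defined translation-invariant total order, and the same key step for (V3), namely $ca^{-1}\in(a+b)a^{-1}=1+ba^{-1}\subseteq\mathcal O$ via distributivity and the strictness of $\mathcal O$ from Proposition \ref{valstrict}. The extra details you supply (well-definedness on cosets, the degenerate cases of (V3)) are fine but not points of divergence.
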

\begin{proof}
First we show that $\leq$ is a  linear order on $\Gamma$ compatible with multiplication. Since $aa^{-1}=1\in\mathcal{O}$, reflexivity is clear. If $ab^{-1},ba^{-1}\in\mathcal{O}$, then $ab^{-1}\in\mathcal{O}^\times$ so $a\mathcal{O}^\times=b\mathcal{O}^\times$. Hence $\leq$ is antisymmetric. If $ab^{-1},bc^{-1}\in\mathcal{O}$, then $ac^{-1}=ab^{-1}bc^{-1}\in\mathcal{O}$, showing that $\leq$ is transitive. Take now $a,b\in F^\times$ such that $a\mathcal{O}^\times\leq b\mathcal{O}^\times$ and $c\in F^\times$. We have that $bc(ac)^{-1}=bcc^{-1}a^{-1}=ba^{-1}\in\mathcal{O}$, whence $ac\mathcal{O}^\times\leq bc\mathcal{O}^\times$. This shows that $\leq$ is compatible with the operation of $\Gamma$. Finally, $\leq$ is a total order since $\mathcal{O}$ is a valuation hyperring, so $ab^{-1}\in\mathcal{O}$ or $ba^{-1}\in \mathcal{O}$ for all $a,b\in F^\times$.\par
We now show that $\pi$ is a valuation on $F$. Clearly, $\pi$ is a surjective map, onto the ordered abelian group $\Gamma$ with $\infty$ and (V1) holds. 
Since $\pi$ is a homomorphism of groups we obtain (V2). It remains to show that (V3) holds for $\pi$. Take $x,y\in F$. If one of them is $0$, then (V3) is obvious. We may then assume that $x,y\in F^\times$ and that $x\mathcal{O}^\times\leq y\mathcal{O}^\times$. Take $z\in x+y$. We wish to show that $zx^{-1}\in\mathcal{O}$. By assumption we have that $yx^{-1}\in\mathcal{O}$, thus
\[
zx^{-1}\in(x+y)x^{-1}=1+yx^{-1}\subseteq\mathcal{O},
\]
since $\mathcal O$ is strict by  Proposition \ref{valstrict}. This completes the proof.
\end{proof}

\section{Compatibility between orderings and valuations in hyperfields}
An ordering $<$ of a field $K$ is \emph{compatible} with a valuation $v$ if the valuation ring $\mathcal O_v$ of $v$ is convex with respect to $<$. The order relation of a real hyperfield $F$, defined by 
$$ a<b \Leftrightarrow b-a \subseteq P.$$
for an ordering $P$ of $F$, allows us to consider the notion of convexity in  $F$. However, we will see in Proposition \ref{comconv}, that it is not enough to define a compatibility between ordering and valuation in this manner.

Every real field $K$ have characteristic 0, so it contains the rationals. If $P$ is an ordering of $K$, then the set
\begin{align*}
A(P)&:=\{a\in K\mid n\pm a \in P \text{ for some } n \in \N \}
\end{align*}
is a valuation ring of $K$ (associated with the natural valuation of $P$) with the maximal ideal
\begin{align*}
I(P)&:=\{a\in K\mid \frac{1}{n}\pm a \in P\text{ for all } n \in \N \}.
\end{align*}
For details we refer the reader to Section 2 of \cite{Lam}.

The real hyperfields may not even contain all natural numbers.
The simplest example is given by the sign hyperfield in which
$1+1 =\{1\}$.
However, since every ordering is additively closed and contains 1, we have that
\[
0\notin \underbrace{1+\ldots+1}_{n\text{ times}}.
\]

Let $F$ be a hyperfield. For $n\in \N$, define
\[
I_n:=\underbrace{1+\ldots+1}_{n\text{ times}}.
\]

\begin{lem}\label{Inm}
With the notations introduced above we have, for all $n,m\in\N$
\[
I_n+I_m=I_{n+m}\quad\text{and}\quad I_n\cdot I_m\subseteq I_{nm}.
\]
\end{lem}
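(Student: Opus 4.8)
The plan is to fix the left-nested reading of the iterated hypersum, $I_1=\{1\}$ and $I_{n+1}=I_n+1=\bigcup_{a\in I_n}a+1$ (any other bracketing gives the same set by (H1)), and to prove both assertions by induction on $m$, with $n$ allowed to range freely throughout the induction. Before starting, I would record, for subsets $A,B\subseteq F$, the notation $A+B:=\bigcup_{a\in A,\,b\in B}a+b$ and $A\cdot B:=\{ab\mid a\in A,\,b\in B\}$, consistent with Section~2, together with two elementary ``subset'' forms of the axioms: $(A+B)+c=A+(B+c)$ for every $c\in F$ (obtained from (H1) by unravelling the unions that define $A+x$ and $x+A$), and $a\cdot(B+c)=a\cdot B+ac$ for every $a,c\in F$, where $a\cdot B:=\{ab\mid b\in B\}$ (obtained the same way from (R3)). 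The point worth stressing is that in each of these one only ever feeds a \emph{single element} into the outer $+$, resp.\ into the left slot of $\cdot$, so no appeal to double distributivity is made --- which is essential, since that law fails in general (Remark~\ref{dd}).

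For $I_n+I_m=I_{n+m}$ I argue by induction on $m$. The base case $m=1$ is immediate: $I_n+I_1=I_n+\{1\}=\bigcup_{a\in I_n}a+1=I_{n+1}$. For the step, assuming $I_n+I_m=I_{n+m}$ for all $n$, the subset associativity gives
\[
I_n+I_{m+1}=I_n+(I_m+1)=(I_n+I_m)+1=I_{n+m}+1=I_{n+m+1},
\]
the middle equality because both sides equal $\bigcup_{a\in I_n,\,b\in I_m}(a+b)+1$ (here one uses $(a+b)+1=a+(b+1)$, which is exactly (H1)), and the last two equalities by the induction hypothesis and the definition of $I_{n+m+1}$.

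For $I_n\cdot I_m\subseteq I_{nm}$ I again induct on $m$. The base case is $I_n\cdot I_1=I_n\cdot\{1\}=I_n=I_{n\cdot 1}$. For the step, subset distributivity yields
\[
I_n\cdot I_{m+1}=I_n\cdot(I_m+1)=\bigcup_{a\in I_n}a\cdot(I_m+1)=\bigcup_{a\in I_n}\bigcup_{b\in I_m}(ab+a).
\]
For fixed $a\in I_n$ and $b\in I_m$ we have $ab\in I_n\cdot I_m\subseteq I_{nm}$ by the induction hypothesis, while $a\in I_n$; hence $ab+a\subseteq I_{nm}+I_n=I_{nm+n}=I_{n(m+1)}$, using the additive identity just proved. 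Taking the union over $a$ and $b$ gives $I_n\cdot I_{m+1}\subseteq I_{n(m+1)}$, which closes the induction. The only genuinely delicate point in the whole argument is the bookkeeping around the subset versions of (H1) and (R3) --- verifying that the relevant identity really is the one-variable unravelling of an element-level axiom, not the forbidden two-variable one; once that is in place everything else is a routine double induction.
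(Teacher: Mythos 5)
Your proof is correct. For the additive identity $I_n+I_m=I_{n+m}$ you do exactly what the paper does --- appeal to associativity of the hyperaddition --- just with the induction and the subset-level bookkeeping written out in full. For the multiplicative inclusion, however, you take a genuinely different route: the paper derives $I_n\cdot I_m\subseteq I_{nm}$ from Remark~\ref{dd}, i.e.\ from Viro's weak double distributivity inclusion $(a+b)(c+d)\subseteq ac+ad+bc+bd$ (extended to $n$- and $m$-fold sums), whereas you deliberately avoid that remark and instead run an induction on $m$ that uses only the element-level distributivity axiom (R3) together with the already-proved additive identity $I_{nm}+I_n=I_{nm+n}$. Both arguments are valid; the paper's is shorter because it cites a known inclusion, while yours is more self-contained and in effect reproves the needed instance of weak double distributivity from the bare hyperring axioms --- your explicit care that only a single element is ever fed into the outer $+$ or the left slot of $\cdot$ is exactly the right point to insist on, given that full double distributivity fails in general.
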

\begin{proof}
The first equality is just a consequence of the associativity of  addition. The second follows from
Remark \ref{dd}.
\end{proof}
\begin{example}
The equality $I_n\cdot I_m=I_{nm}$ in general might not hold. Consider the factor hyperfield $\Q_{(\Q^{\times})^2}$. 
From the well known equality
\[
(a^2+b^2)(c^2+d^2)=(ad+bc)^2+(ac-bd)^2
\]
it follows that in this hyperfield 
$I_2\cdot I_2=I_2\subsetneq I_4$ since there are 
positive rationals which are sums of four squares but not of two squares.
\end{example}
Take a hyperfield $F$ with an ordering $P$.
Define 
\begin{align*}
A(P)&:=\{a\in F\mid (I_n\pm a)\cap P\neq\emptyset \text{ for some } n\in \N \}\\
I(P)&:=\{a\in F\mid 1\pm I_n\cdot a\subseteq P  \text{ for all } n\in \N\}.
\end{align*}

\begin{pr}
The set $A(P)$ is a valuation hyperring in $F$ with unique maximal hyperideal $I(P)$. Moreover, $I_n \subseteq A(P)$ for every $n\in \N$.
\end{pr}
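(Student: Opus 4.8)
The plan is to show that $A(P)$ is a strict subhyperring of $F$ which is a hyperring with unity, then that it satisfies the valuation condition ``$x\in A(P)$ or $x^{-1}\in A(P)$'', and finally to identify $A(P)\setminus A(P)^\times$ with $I(P)$, so that Lemma~\ref{maxideal} yields the last assertion. Two preliminary observations come first. Using $I_{n'}=I_{n'-n}+I_n$ (Lemma~\ref{Inm}) together with $P+P\subseteq P$, one sees that $(I_n\pm a)\cap P\neq\emptyset$ implies $(I_{n'}\pm a)\cap P\neq\emptyset$ for every $n'\geq n$ (add $I_{n'-n}$ to an element of $(I_n\pm a)\cap P$); hence a common index may be used for finitely many elements, and the two ``$\pm$'' conditions in the definition of $A(P)$ may be taken with the same $n$. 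Moreover, for $a\in I_n$ one has $I_k+a\subseteq I_{k+n}\subseteq P$ for all $k$, and $I_k\subseteq I_{k+n}-a$ for $k\geq 1$ (because $0\in I_n-a$ and $x+0=\{x\}$); taking $k=n+1$ gives $a\in A(P)$, so $I_n\subseteq A(P)$ for every $n$ — in particular $0,1\in A(P)$ — and plainly $a\in A(P)\iff-a\in A(P)$.

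Next I would prove that $A(P)$ is closed under the hyperaddition. Given $a,b\in A(P)$, fix a common $n$ with $p_a\in(I_n+a)\cap P$, $p_b\in(I_n+b)\cap P$, and choose $i,i'\in I_n$ with $p_a\in i+a$, $p_b\in i'+b$; by (H4) then $a\in p_a-i$ and $b\in p_b-i'$, so for any $c\in a+b$,
\[
c\in a+b\subseteq(p_a+p_b)-(i+i')\subseteq P-I_{2n},
\]
using $-(i+i')=(-i)+(-i')$, $i+i'\subseteq I_{2n}$ and $P+P\subseteq P$. Thus $c\in p-k$ for some $p\in P$, $k\in I_{2n}$, and (H4) gives $p\in I_{2n}+c$, i.e. $(I_{2n}+c)\cap P\neq\emptyset$. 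Applying this to $-c\in(-a)+(-b)$ (with $-a,-b\in A(P)$) yields $(I_{2n}-c)\cap P\neq\emptyset$, whence $c\in A(P)$. Together with $-b\in A(P)$ this shows $a-b\subseteq A(P)$ for all $a,b\in A(P)$.

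For multiplication, since $a\mapsto-a$ preserves $A(P)$ and $ab\in A(P)\iff-(ab)\in A(P)$, and $A(P)\ni0$, one may assume $a,b\in P$; then $ab\in P$, so $I_m+ab\subseteq P$ automatically, and it only remains to produce $m$ with $(I_m-ab)\cap P\neq\emptyset$. Choosing a common $n$ and witnesses for $(I_n-a)\cap P\neq\emptyset$, $(I_n-b)\cap P\neq\emptyset$, axiom (H4) gives $a\in i-p$ and $b\in j-q$ with $i,j\in I_n$ and $p,q\in P$. A short computation with the double-distributivity inclusion of Remark~\ref{dd} and Lemma~\ref{Inm} — expanding $ab\in(i-p)b$ and $ib\in i(j-q)$, noting that each term lands in $P$, in $-P$, or in $I_{n^2}$, and absorbing the two positive summands via $P+P\subseteq P$ — yields $ab\in i''-p''$ for some $i''\in I_{n^2}$, $p''\in P$; then (H4) gives $p''\in I_{n^2}-ab$, so $ab\in A(P)$. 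At this point $A(P)$ is a strict subhyperring of $F$ (Definition~\ref{subh}) containing $0$ and $1\neq0$.

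Finally, for the valuation condition, suppose $x\notin A(P)$. By the monotonicity observation, either $(I_n+x)\cap P=\emptyset$ for all $n$ or $(I_n-x)\cap P=\emptyset$ for all $n$; after replacing $x$ by $-x$, assume the latter. Since $x\in I_n$ would give $x\in A(P)$, we have $0\notin I_n-x$, so $I_n-x\subseteq F^\times\setminus P=-P$, i.e. $x-I_n\subseteq P$ for all $n$; picking $y\in x-1$ gives $x\in y+1\subseteq P$, hence $x,x^{-1}\in P$. For $i\in I_m$, multiplying the set $1-ix^{-1}$ by $x$ gives $x(1-ix^{-1})=x-i\subseteq x-I_m\subseteq P$, so $1-ix^{-1}\subseteq x^{-1}P\subseteq P$; since $1+ix^{-1}\subseteq P+P\subseteq P$ trivially, we get $x^{-1}\in I(P)$, and as $I(P)\subseteq A(P)$ (index $1$) this gives $x^{-1}\in A(P)$. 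Hence $A(P)$ is a valuation hyperring, and by Lemma~\ref{maxideal} its unique maximal hyperideal is $A(P)\setminus A(P)^\times$. It remains to check this equals $I(P)$: if $a\in A(P)\setminus A(P)^\times$, $a\neq0$, then $a^{-1}\notin A(P)$, so the argument just given (applied to $a^{-1}$) puts $a=(a^{-1})^{-1}\in I(P)$, while $0\in I(P)$; conversely, an $a\in I(P)\cap A(P)^\times$ (WLOG $a\in P$, so $a,a^{-1}\in P$) would, from a witness $a^{-1}\in i-q$ with $i\in I_m$, $q\in P$, yield $aq\in ai-1\subseteq aI_m-1\subseteq-P$ by $1-I_m a\subseteq P$, contradicting $aq\in P$. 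I expect the main obstacle to be exactly the closure of $A(P)$ under the hyperaddition and, above all, under multiplication: since $+$ is multivalued one cannot manipulate ``inequalities'' directly, but must repeatedly unfold memberships via (H4), apply Remark~\ref{dd} and Lemma~\ref{Inm}, and repack the result, and the sign bookkeeping in the multiplicative case is where the care is concentrated.
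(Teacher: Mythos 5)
Your proof is correct and follows essentially the same route as the paper's: closure of $A(P)$ under the hyperaddition and under multiplication via (H4), distributivity and Lemma \ref{Inm}, the valuation condition by showing that $x\notin A(P)$ forces $x\pm I_n\subseteq P$ and hence $1\pm I_n x^{-1}\subseteq P$, and identification of the maximal hyperideal through Lemma \ref{maxideal}. You are in fact slightly more complete than the paper: you make the ``common index'' monotonicity explicit and you verify the converse inclusion $I(P)\subseteq A(P)\setminus A(P)^\times$ (no element of $I(P)$ is a unit of $A(P)$), a step the paper's proof leaves implicit.
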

\begin{proof}
 First,  observe that from the definition of $A(P)$ it follows  that if $a\in A(P)$, then also $-a\in A(P)$. Take $a, b \in A(P)$. Thus there are $n, m \in \N$
 such that $(I_n\pm a)\cap P\neq\emptyset$ and $(I_m\pm b)\cap P\neq\emptyset$.  Take $c\in a+b$. Assume first that $c\in P$. Then
 $I_{n+m}+c \subseteq P$. Take  $s\in (I_n-a)\cap P$ and $t\in(I_m-b)\cap P$. By axiom (H4) we have that $a\in I_n-s$ and $b\in I_m-t$. Hence, using Lemma \ref{Inm}, we obtain:
\[
c\in a+b\subseteq (I_n-s)+(I_m-t)=I_{n+m}-(s+t).
\]
Therefore, there exists $u\in s+t\subseteq P$ such that $c\in I_{n+m}-u$. Using again (H4) we obtain that $u\in (I_{n+m}-c)$, so
$(I_{n+m}-c) \cap P\neq\emptyset$ and thus $c \in A(P)$.
If $c \notin P$, then $-c \in (-a-b) \cap P$ and as above we show that 
$-c \in A(P)$, so also $c\in A(P)$. We have shown that $a+b \subseteq A(P)$.

We will show that $ab\in A(P)$ when we take $a,b\in A(P)$. According to our first observation in this proof we may assume that $a,b\in P$. Then for every $k\in\N$, $I_k+ab\subseteq P$.  Like before, let $n,m$ be natural numbers such that $(a-I_n)\cap P\neq \emptyset$ and $(b-I_m)\cap P\neq\emptyset$. Take $s\in (I_n-a)\cap P$ and $t\in (I_m-b)\cap P$. Pick $p\in I_n$ and $q\in I_m$ such that $s\in p-a$ and $t\in q-b$. Then,  $p\in s+a$ and $q\in t+b$ by axiom (H4). Using the inclusion \ref{dde} we obtain:
\[
pq\in(s+a)(t+b) \subseteq \underbrace{st+sb+at}_{\subseteq P}+ab.
\]
Therefore, $pq\in x+ab$ for some $x\in P$ which implies that $x\in pq-ab\subseteq I_n\cdot I_m-ab\subseteq I_{nm}-ab$. Hence $(I_{nm}\pm ab)\cap P\neq\emptyset$ and therefore
$ab\in A(P)$. 
We have proven that $A(P)$ is a strict subhyperring~of~$F$. \par
For the next part of the proof we will need that $A(P)$ is a ring with unity. Indeed, we have
$$1 = 1+0\in 1+(1-1) = (1+1) - 1 = I_2  - 1,$$
thus $(I_2-1) \cap P \neq \emptyset$.
Since $I_2+1\subseteq P$ we have that 
 $(I_2\pm 1) \cap P \neq \emptyset$ and thus $1 \in A(P)$. \par
Now we will show that $A(P)$ is a valuation hyperring of $F$. 
Take  $a\in F\setminus A(P)$. Then from the observation above, $a \neq 1$ and according to the first observation of this proof we may assume that  $a\in P$. Then also $a^{-1} = a(a^{-1})^2 \in P$ and thus $1+a^{-1}\subseteq P$. Since $a\notin A(P)$ but $1+a \subseteq P$ we obtain that  $1-a\subseteq -P \cup \{0\}$. In fact
$1-a\subseteq -P $, because $a\neq 1$.
Thus
 $1 - a^{-1} = a^{-1}(a-1) \subseteq P$ and therefore   $a^{-1}\in A(P)$. \par
 
Now we will prove that $I(P)$ is the unique maximal hyperideal of $A(P)$. 
Take $ a \in A(P)$ which is not a unit. As before, we may assume that $a\in P$. Therefore,  $1+ I_na\subseteq P$ for every $n\in \N$ and hence also
$I_n +a^{-1}\subseteq P$ for every $n\in \N$. 
Since $a^{-1} \notin A(P)$  we must have that $(I_n -a^{-1})\cap P = \emptyset$ for all $n\in \N$. Therefore, $a^{-1}-I_n \subseteq P\cup\{0\}$. Since $A(P)$ is a strict subhyperring we have that $I_n \subset A(P)$. Therefore, $a^{-1}\notin I_n$  and thus
 $a^{-1}-I_n \subseteq P$. Multiplying by
 $a$ we obtain that $1-I_na \subseteq P$ for every $n\in \N$.
 This proves that $a \in I(P)$.
 \end{proof}

\begin{example}
Consider the quotient hyperfield $F=\Q_{(\Q^\times)^2}$ and let  $P :=\Q^{+}_{(\Q^\times)^2}$ be an ordering induced on this hyperfield by the unique ordering $\Q^+$ of $\Q$. Since every positive rational is a sum of at most four squares, we have that $I_4 = P$. Since
$I_4 \subset A(P)$ we have that $A(P) =F$.
\end{example}
\begin{df}
 An ordering $P$ of a hyperfield $F$ is called \emph{archimedean} if $A(P) = F$.
\end{df}

\begin{example}
Consider the factor hyperfield $F=R_{E^+(R)}$ from Example \ref{naex} with its unique ordering 
\[
P = \lbrace (s, \gamma) \in F^{\times} \mid s=1 \rbrace.
\]
Observe that $(1,0)+(1,0) = \lbrace (1,0) \rbrace$, so $I_n = \lbrace (1,0) \rbrace$ for every $n \in \N$. Hence $A(P) = \lbrace a \in F \mid ((1,0) \pm a) \cap P \neq \emptyset \rbrace$. \par
Clearly, $0 \in A(P)$. Take $a \in F^{\times}$. Then  $a = (s, \gamma)$, where $s \in \lbrace -1, 1 \rbrace$ and $\gamma \in \Gamma$. If $s = 1$, then $(1,0) + (s, \gamma) \subseteq P$. On the other hand, we have that the set $(1,0)-(s,\gamma) = (1,0) + (-s, \gamma)$ has a nonempty intersection with $P$ if and only if $\gamma \geq 0$. Hence,
\[
A(P) = \lbrace (s, \gamma) \in F^ \times \mid \gamma \geq 0, s \in \lbrace -1,1 \rbrace \rbrace \cup \lbrace 0 \rbrace \subsetneq F,
\]
which means that $P$ is a non-archimedean ordering.
Since $A(P)^\times = \lbrace (s, 0)  \mid  s \in \lbrace -1,1 \rbrace \rbrace $, the maximal ideal of $A(P)$ is the set
\[
I(P) = \lbrace (s, \gamma) \in F^ \times \mid \gamma > 0, s \in \lbrace -1,1 \rbrace \rbrace \cup \lbrace 0 \rbrace.
\]
Note that $A(P)=\mathcal{O}_v$ and $I(P)=\mathcal{M}_v$ where $v$ is the valuation described in  Example \ref{hypervalex}.
\end{example}

\begin{pr} \label{4eq}
Let $F$ be a hyperfield with a valuation $v$ and an ordering $P$.
The following conditions are equivalent:
\begin{itemize}
    \item[$(i)$] $A(P)\subseteq\mathcal{O}_v$,
    \item[$(ii)$] $\overline{P} := \lbrace a + \mathcal M_v \mid a \in P \cap {\mathcal O_v}^{\times} \rbrace$ is an ordering of $\mathcal O_v/\mathcal{M}_v$,
    \item[$(iii)$] $1+\mathcal{M}_v\subseteq P$,
    \item[$(iv)$] if $(b+a)\cap P\neq \emptyset$ and $(b-a)\cap P\neq \emptyset$, then $v(a)\geq v(b)$. 
\end{itemize}
\end{pr}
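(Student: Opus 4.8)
The plan is to route every equivalence through condition $(iii)$: I would prove $(i)\Leftrightarrow(iii)$ and $(ii)\Leftrightarrow(iii)$ and then close the loop with $(iii)\Rightarrow(iv)\Rightarrow(i)$. The recurring tools are Lemma~\ref{valring}, the fact that $(F^\times)^2\subseteq P$ for any ordering $P$ (so $s\in P$ implies $s^{-1}=s(s^{-1})^2\in P$), the identity $x+0=\{x\}$, the quotient relation $x\sim_{\mathcal M_v}y\Leftrightarrow(x-y)\cap\mathcal M_v\neq\emptyset$, and repeated use of axiom (H4). In each step the degenerate cases $a=0$ or $b=0$ should be disposed of first, where the assertions are either immediate or vacuous (e.g.\ in $(iv)$, if $b=0$ the two hypotheses force $a\in P\cap-P=\emptyset$).

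For $(i)\Leftrightarrow(iii)$: recall that $A(P)$ and $\mathcal O_v$ are valuation hyperrings with maximal hyperideals $I(P)$ and $\mathcal M_v$, so by Lemma~\ref{incl} condition $(i)$ is equivalent to $\mathcal M_v\subseteq I(P)$. Assuming $(iii)$, for $m\in\mathcal M_v$ and $x\in I_n$ one first checks $v(x)\ge0$ (induction on $n$ from $v(1)=0$ and (V3)), hence $v(xm)>0$ and $1\pm xm\subseteq1+\mathcal M_v\subseteq P$; thus $m\in I(P)$. Conversely, $\mathcal M_v\subseteq I(P)$ together with the case $n=1$ in the definition of $I(P)$ gives $1+m\subseteq P$ for every $m\in\mathcal M_v$, which is $(iii)$. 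For $(iii)\Rightarrow(iv)$: if $v(a)<v(b)$ then $ba^{-1}\in\mathcal M_v$; multiplying witnesses $s\in(b+a)\cap P$ and $t\in(b-a)\cap P$ by $a^{-1}$ yields $sa^{-1}\in1+ba^{-1}\subseteq P$ and $ta^{-1}\in ba^{-1}-1\subseteq-P$, and multiplying these by $s^{-1},t^{-1}\in P$ forces $a^{-1}\in P\cap-P=\emptyset$. For $(iv)\Rightarrow(i)$: given $a\in A(P)$, replacing $a$ by $-a$ if necessary (recall $A(P)$ is closed under negation) we may assume $a\in P$; then $(I_n+a)\cap P\neq\emptyset$ automatically, while $(I_n-a)\cap P\neq\emptyset$ produces some $q\in I_n$ with $(q-a)\cap P\neq\emptyset$, and $(q+a)\cap P\neq\emptyset$ since $q,a\in P$; applying $(iv)$ with $b=q$ gives $v(a)\ge v(q)\ge0$.

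For $(iii)\Leftrightarrow(ii)$, one verifies the four ordering axioms for $\overline P$ in $\overline F=\mathcal O_v/\mathcal M_v$, using $\overline F^\times=\{a+\mathcal M_v\mid a\in\mathcal O_v^\times\}$. Multiplicativity and $\overline P\cup-\overline P=\overline F^\times$ are immediate. For $\overline P+\overline P\subseteq\overline P$: given $a,b\in P\cap\mathcal O_v^\times$ and $c\in a+b$ one has $c\in P$, $c\neq0$ (else $a=-b\in-P$) and $v(c)\ge0$; if $v(c)>0$ then $ca^{-1}\in\mathcal M_v$ and, applying (H4) to $ca^{-1}\in1+ba^{-1}$, one gets $-ba^{-1}\in1-ca^{-1}\subseteq1+\mathcal M_v\subseteq P$, contradicting $ba^{-1}\in P$; hence $c\in\mathcal O_v^\times$ and $c+\mathcal M_v\in\overline P$. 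For $\overline P\cap-\overline P=\emptyset$: $\overline x\in\overline P\cap-\overline P$ gives $a,b\in P\cap\mathcal O_v^\times$ with $a+\mathcal M_v=(-b)+\mathcal M_v$, whence $ab^{-1}\in(-1)+\mathcal M_v\subseteq-P$ by $(iii)$, contradicting $ab^{-1}\in P$. Conversely, assume $(ii)$ and take $m\in\mathcal M_v$, $c\in1+m$; Lemma~\ref{valring}(4) gives $v(c)=0$, so $c\in\mathcal O_v^\times$, and (H4) gives $\overline c=\overline 1$; if $c\notin P$ then $-c\in P\cap\mathcal O_v^\times$, so $\overline 1=\overline c=-\overline{-c}\in-\overline P$, while $\overline 1\in\overline P$ since $1\in P\cap\mathcal O_v^\times$, contradicting $\overline P\cap-\overline P=\emptyset$; hence $c\in P$, proving $(iii)$.

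The step I expect to be the main obstacle is the inclusion $\overline P+\overline P\subseteq\overline P$ in $(iii)\Rightarrow(ii)$: a priori $a+b$ could contain an element of strictly positive value even when $a,b\in\mathcal O_v^\times$ (the hyperfield analogue of cancellation), so that $\overline 0$ would enter the sum, and $(iii)$ is precisely what excludes this. This is the concrete reason why a naive notion of convexity does not suffice to define compatibility, as anticipated before Proposition~\ref{comconv}. Everything else is routine bookkeeping with (H4) and with the relation $\sim_{\mathcal M_v}$.
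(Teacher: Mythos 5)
Your proof is correct, and every individual argument coincides in substance with the paper's; only the decomposition differs. The paper proves the single cycle $(i)\Rightarrow(ii)\Rightarrow(iii)\Rightarrow(iv)\Rightarrow(i)$: inside the proof of $(i)\Rightarrow(ii)$ it first derives $1+\mathcal M_v\subseteq 1+I(P)\subseteq P$ from Lemma~\ref{incl} (i.e.\ it proves $(i)\Rightarrow(iii)$ as a hidden intermediate step) and then verifies the ordering axioms for $\overline P$. You extract that step as a standalone equivalence $(i)\Leftrightarrow(iii)$ and run the residue verification from $(iii)$; your proofs of $(ii)\Rightarrow(iii)$, $(iii)\Rightarrow(iv)$ and $(iv)\Rightarrow(i)$ are the paper's (same contradictions obtained by multiplying witnesses by $a^{-1}$, same choice of $q\in I_n$ with $v(q)\geq 0$). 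Two small points of comparison. Your scheme is mildly redundant: the direct half $(iii)\Rightarrow(i)$ of your first equivalence is subsumed by $(iii)\Rightarrow(iv)\Rightarrow(i)$, so only $(i)\Rightarrow(iii)$ via Lemma~\ref{incl} is really needed. And your argument for $\overline P+\overline P\subseteq\overline P$ (supposing $ca^{-1}\in\mathcal M_v$ and deducing $-ba^{-1}\in 1-ca^{-1}\subseteq P$ via (H4)) is a clean local alternative to the paper's, which instead first proves that every unit representative of a coset in $\overline P$ lies in $P$ and then rules out $(a+b)\cap\mathcal M_v\neq\emptyset$ from that observation. You also correctly identified the crux: excluding $\overline 0$ from $\overline P+\overline P$ is exactly where $(iii)$ is indispensable and convexity alone would not suffice.
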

\begin{proof}
To show that $(i)$ implies $(ii)$, we first prove that if $A(P)\subseteq\mathcal{O}_v$ and $
a + \mathcal M_v = b+ \mathcal M_v$  where $ b \in P \cap {\mathcal O_v}^{\times} $, then also $a \in P \cap {\mathcal O_v}^{\times}$. Since $b$ is a unit than also $a$ must be a unit in $\mathcal O_v$.    From the definition of $I(P)$ we obtain immediately that $1+I(P) \subseteq P$. 
Using Lemma \ref{incl} we obtain that $ab^{-1}+\mathcal M_v = 1+\mathcal{M}_v\subseteq 1+I(P)\subseteq P$. In particular, $ab^{-1} \in P$ and therefore $a = ab^{-1} \cdot b \in P$. \par

Now we wish to show that $\overline P$ is a ordering of $\mathcal O_v/\mathcal{M}_v$. Take $a + \mathcal{M}_v$, $b + \mathcal{M}_v \in \overline{P}$. We have $(a + \mathcal{M}_v) + (b + \mathcal{M}_v) = \lbrace c + \mathcal M_v \mid c \in a + b \rbrace$. Since $a+b \subseteq P$ we have that $c \in P$ for every $c \in a+b$. If there is $c \in a+b$ which is not a unit in ${\mathcal O_v}$, then  $(a+b) \cap \mathcal M_v \neq \emptyset$, which means that $a+\mathcal M_v = -b + \mathcal M_v$. By our first observation, this is impossible since $a, b \in P$. Hence $\overline P + \overline P \subseteq \overline P$. Moreover, $(a + \mathcal{M}_v) \cdot (b + \mathcal{M}_v) = ab + \mathcal{M}_v \in \overline{P}$, since $ab \in P \cap {\mathcal O_v}^{\times}$. 
Take $c \in {\mathcal O_v}$. Then $c \in P$ or $-c \in P$. If $c \in \mathcal M_v$, then $c + \mathcal M_v = 
0+  \mathcal M_v$.  If $c\in P \cap {\mathcal O_v}^{\times}$, then $c + \mathcal M_v \in \overline{P}$ and if $c\in -P \cap {\mathcal O_v}^{\times}$, then $c + \mathcal M_v \in -\overline{P}$. Since $P \cap -P = \emptyset$, therefore also  $\overline{P} \cap -\overline{P} = \emptyset$.
\par
To prove the implication from $(ii)$ to $(iii)$, we assume that $\overline{P}$ is an ordering of $\mathcal O_v/\mathcal{M}_v$. Take any $x \in 1 + \mathcal M_v$. Since the cosets are equivalence classes of an equivalence relation (see \cite[Lemma 3.3]{Jun}), we have that
$x +\mathcal M_v = 1 + \mathcal M_v$. Therefore 
$x$ is a unit in $\mathcal O_v$ and $x +\mathcal M_v \in \overline{P}$. If $-x\in P$, then 
$-(x +\mathcal M_v) = -x +\mathcal M_v \in \overline{P}$
and thus $\overline{P}\cap -\overline{P} \neq \emptyset$
which contradicts that $\overline{P}$ is an ordering. Therefore $x\in P$ and thus $1+ \mathcal M_v \subseteq P$. \par

Now we show that $(iii)$ implies $(iv)$. Assume that $(iii)$ holds and that $(b\pm a)\cap P\neq\emptyset$. Suppose that $v(a)<v(b)$ so that $v(ba^{-1})>0$ and thus $\pm ba^{-1}\in\mathcal{M}_v$. By $(iii)$ we have that $1\pm ba^{-1}\subseteq P$. Let us distinguish two cases: if $a\in P$, then multiplying by $a$ we obtain $(a\pm b)\subseteq P$. However, this implies that $(a-b)\subseteq P$ so that $(b-a)\cap P=\emptyset$, a contradiction. If $a\notin P$, then $-a\in P$ and multiplying by $-a$ we obtain $(-a\pm b)\subseteq P$ so that $(-a-b)\subseteq P$ and thus $(a+b)\cap P=\emptyset$ which contradicts again our assumption. \par

Finally, we prove that $(iv)$ implies $(i)$. Assume that $(iv)$ holds and take $a\in A(P)$. 
Then there exist $n\in \N$ and $x\in I_n$ such that $(x-a)\cap P\neq\emptyset$.
Since $\mathcal O_v$ is closed under additive inverses we may without loss of generality assume that $a \in P$.  Thus $x+a\subseteq P$, so by $(iv)$ we obtain that $v(a)\geq v(x)\geq 0$. Therefore, $a\in\mathcal{O}_v$.  This completes the proof.
\end{proof}

\begin{df}
A valuation $v$ on a hyperfield $F$ satisfying the equivalent  conditions of Proposition \ref{4eq} is called   \emph{a valuation compatible with the ordering $P$}.
We denote by $\mathcal X(F,v)$ the set of all orderings of $F$ compatible with $v$. 
The ordering $\overline P$ is called \emph{the ordering induced by $P$ on the residue field $\mathcal O_v/\mathcal{M}_v$}.
The valuation associated with the valuation ring $A(P)$ is called \emph{the natural valuation associated with $P$}. 
\end{df}

For a hyperfield $F$ with a valuation $v$ and $a\in\mathcal{O}_v$, denote  $$\overline a: = a+\mathcal{M}_v.$$ The residue hyperfield $\mathcal{O}_v / \mathcal{M}_v$ denote by  $\overline F$. 

\begin{pr}
Let $v$ be the natural valuation associated with an ordering $P$ on the real
hyperfield $F$.  The ordering $\overline P$ induced by $P$ on the residue hyperfield $\overline F = A(P)/I(P)$ is archimedean.
\end{pr}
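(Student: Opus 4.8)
The plan is to prove directly that $A(\overline{P})=\overline{F}$. Throughout, write $\mathcal{O}_v=A(P)$ and $\mathcal{M}_v=I(P)$ (this is exactly what it means for $v$ to be the natural valuation associated with $P$), and let $\pi\colon\mathcal{O}_v\to\overline{F}=\mathcal{O}_v/\mathcal{M}_v$ be the canonical projection, which is a \emph{strict} homomorphism of hyperrings by Proposition~\ref{hypker}. Two preliminary observations: first, $\overline{P}$ really is an ordering of $\overline{F}$, since $A(P)=\mathcal{O}_v\subseteq\mathcal{O}_v$, so condition $(i)$ of Proposition~\ref{4eq} holds and hence so does $(ii)$; thus $(\overline{F},\overline{P})$ is a real hyperfield to which the earlier results apply. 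Second, because $I_n\subseteq A(P)=\mathcal{O}_v$ for every $n$, strictness of $\pi$ gives $\pi(I_n)=\overline{I_n}$, where $\overline{I_n}$ denotes the $n$-fold hypersum of $\overline{1}$ in $\overline{F}$; and the proposition characterising $A(\,\cdot\,)$ as a valuation hyperring, applied to $(\overline{F},\overline{P})$, yields $\overline{I_n}\subseteq A(\overline{P})$ for all $n$.

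Now fix $\overline{a}\in\overline{F}$ and aim to show $\overline{a}\in A(\overline{P})$. If $\overline{a}=0$ this is trivial, so assume $\overline{a}\neq 0$; then any representative $a$ of $\overline{a}$ lies in $\mathcal{O}_v^\times=\mathcal{O}_v\setminus\mathcal{M}_v$. Since $A(\overline{P})$ is stable under negation (directly from its definition), $\pi(-a)=-\overline{a}$, and $a\in P$ or $-a\in P$, I may assume $a\in P\cap\mathcal{O}_v^\times$, so that $\overline{a}\in\overline{P}$. Because $a\in A(P)$ and $a\in P$, there is $n\in\N$ with $(I_n-a)\cap P\neq\emptyset$, say $p\in x-a$ with $x\in I_n$ and $p\in P$. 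As $x,a\in\mathcal{O}_v$ and $\mathcal{O}_v$ is a strict subhyperring of $F$ by Proposition~\ref{valstrict}, we get $p\in x-a\subseteq\mathcal{O}_v$. Applying $\pi$ to the membership $p\in x-a$ then gives $\overline{p}\in\overline{x}-\overline{a}\subseteq\overline{I_n}-\overline{a}$.

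It remains to split into cases according to whether $p$ is a unit of $\mathcal{O}_v$. If $p\in\mathcal{M}_v$, then $\overline{p}=0$, so $0\in\overline{x}-\overline{a}$, and axiom (H3) forces $\overline{a}=\overline{x}\in\overline{I_n}\subseteq A(\overline{P})$. If instead $p\in\mathcal{O}_v^\times$, then $p\in P\cap\mathcal{O}_v^\times$ gives $\overline{p}\in\overline{P}$, whence $(\overline{I_n}-\overline{a})\cap\overline{P}\neq\emptyset$; and since $\overline{1},\overline{a}\in\overline{P}$ we also have $\overline{I_n}+\overline{a}\subseteq\overline{P}+\overline{P}\subseteq\overline{P}$, so $(\overline{I_n}+\overline{a})\cap\overline{P}\neq\emptyset$ for the \emph{same} $n$, and therefore $\overline{a}\in A(\overline{P})$. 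In either case $\overline{a}\in A(\overline{P})$, so $A(\overline{P})=\overline{F}$, i.e.\ $\overline{P}$ is archimedean. The only point that genuinely needs care is the transition from $F$ to $\overline{F}$ in the line $\overline{p}\in\overline{x}-\overline{a}$: it is legitimate precisely because $p\in\mathcal{O}_v$, which rests on the strictness of the valuation hyperring $\mathcal{O}_v$ (Proposition~\ref{valstrict}) — everything else is bookkeeping with the definitions of $A(P)$, $\overline{P}$ and the fact that one may always enlarge $n$ using $\overline{P}+\overline{P}\subseteq\overline{P}$.
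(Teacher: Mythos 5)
Your proof is correct. The setup is sound: $\overline P$ is indeed an ordering by Proposition~\ref{4eq}$(i)\Rightarrow(ii)$ with $A(P)=\mathcal O_v$, the identification $\pi(I_n)=\overline{I_n}$ is justified by strictness of the projection, and the reduction to $a\in P\cap\mathcal O_v^\times$ is legitimate since $A(\overline P)$ is closed under negation. The overall skeleton (push a witness of $a\in A(P)$ through $\pi$) matches the paper's, but you handle the one genuine difficulty — that the witness $p\in(I_n-a)\cap P$ might lie in $\mathcal M_v$ and hence map to $0$ — differently. The paper pre-processes the witness: it replaces $x\in(I_n\pm a)\cap P$ by an element of $x+1\subseteq(I_{n+1}\pm a)\cap P$ and invokes condition $(iv)$ of Proposition~\ref{4eq} (with $b=s$, $a=1$) to force $v(s)=0$, so that the new witness is guaranteed to be a unit before reducing. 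You instead split into cases after reducing: if $\overline p=0$ then $0\in\overline x-\overline a$ and the uniqueness in (H3) gives $\overline a=\overline x\in\overline{I_n}\subseteq A(\overline P)$ outright, while if $\overline p\neq 0$ it serves as the required witness; the ``$+$'' side is free because $\overline a\in\overline P$ forces $\overline{I_n}+\overline a\subseteq\overline P$. Your route avoids Proposition~\ref{4eq}$(iv)$ entirely and is slightly more self-contained, resting only on strictness of $\mathcal O_v$ (Proposition~\ref{valstrict}) and the hypergroup axioms; the paper's version has the minor advantage of producing explicit unit witnesses on both sides without a case distinction and without the WLOG reduction to $a\in P$.
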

\begin{proof}
First observe that \[ \overline {I_n}:=\underbrace{\overline 1+\ldots+\overline 1 }_{n\text{ times}} = \{\overline x\mid x \in I_n\}.
\]
 Now we will show that if $a \in A(P)$ and $x\in (I_n+a)\cap P$ or $x\in (I_n-a)\cap P$, then $x+1 \subseteq P \cap A(P)^\times$. Since $x \in P$ and $P$ is additively closed, we have that $x+1 \subseteq P$.
 Take $s \in x+1$. Then $x \in s-1$, so $s-1\cap P \neq \emptyset$. Since $s \in P$, we have $s+1 \subseteq P$.  By (iv) of Proposition \ref{4eq} we have that $0 = v(1) \geq v(s)$. Since $s\in A(P)$ we have that $v(s)=0$, so $s \in A(P)^\times$. \par
 
 Finally take $\overline a \in \overline F$. Since $a \in A(P)$, there is $n \in \N$ and $x,y \in I_n$ such that $x \in (I_n+a)\cap P$ and $y \in (I_n-a)\cap P$.
 Take $s \in x+1$ and $t \in y+1$. Then  $s \in (I_{n+1}+a)\cap P$ and $y \in (I_{n+1}-a)\cap P$. Moreover, $s, t \in A(P)^\times \cap P$ by what we have shown before.
 Therefore, $\overline s \in (\overline{I_{n+1}} +\overline a )\cap \overline P$ and
 $\overline t \in (\overline{I_{n+1}} -\overline a )\cap \overline P$, which proves that $\overline a \in  A(\overline P)$. Hence $A(\overline P) = \overline F$, showing that $\overline P$ is archimedean.
  \end{proof}
  
  We will now investigate the relation between the notion of compatibility of a  valuation $v$  with an ordering of  a real hyperfield and the notion of convexity of the valuation ring of $v$. We will start with the following lemma.
  \begin{lem}
   Let $v$ be a valuation on the real hyperfield $F$, compatible with an ordering $P$. Then $\mathcal O_v$  is convex with  respect to $P$, i.e.,
   if $a, b \in \mathcal O_v$ and $a<x<b$, then $x \in \mathcal O_v$.
  \end{lem}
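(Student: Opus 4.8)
The plan is to use the characterization of compatibility given in Proposition \ref{4eq}, in particular condition $(iv)$, together with the order relation $a<x<b$, which by definition means $x-a\subseteq P$ and $b-x\subseteq P$. So I would take $a,b\in\mathcal O_v$ with $a<x<b$ and aim to show $v(x)\geq 0$, i.e.\ $x\in\mathcal O_v$.

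First I would reduce to a statement about a single element and a single sign condition. From $x-a\subseteq P$ and axiom (H4), for any $c\in x-a$ we have $x\in c+a$; this lets me write $x$ as an element of a sum involving something in $P$ and something in $\mathcal O_v$. More directly: since $x-a\subseteq P$, pick $c\in x-a$, so $c\in P$ and $x\in a+c$. Similarly, since $b-x\subseteq P$, pick $d\in b-x$, so $d\in P$ and $b\in x+d$, hence $x\in b-d$. The idea is then to bound $v(x)$ from below using $v(a)\geq 0$, $v(b)\geq 0$, and the fact that the ``error terms'' $c,d$ lie in $P$, invoking $(iv)$ of Proposition \ref{4eq} to control their valuations. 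Concretely, from $x\in a+c$ with $c\in x-a\subseteq P$, I want to apply $(iv)$: we have $(x+a)\cap P$ — wait, I need to be careful, so let me instead argue via $c$. We have $c\in x-a$ and also (trivially) $c\in c+0$... this is getting delicate, so the cleaner route is:

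Handle the sign of $x$ first. If $x\in P$: then from $x-a\subseteq P$ and $x\in P$ we get, adding, that... actually the slick move is this. Since $a<x$, $x-a\subseteq P$; since $x<b$, $b-x\subseteq P$. Add these: $(x-a)+(b-x)\subseteq P$, and by Lemma \ref{Inm}-style associativity $(x-a)+(b-x)\ni$ some element of $b-a$ (using (H4) and associativity/commutativity of the hyperaddition), so in particular $b-a$ meets $P$, which just says $a<b$ or $a,b$ comparable — not yet enough. The real point must use that $v(a),v(b)\geq 0$. So: from $x\in a+c$ with $c\in P$, I claim $(x-a)\cap P\neq\emptyset$ (namely $c$) and I want a companion $(x+a)\cap P\neq\emptyset$ or similar in order to apply $(iv)$ to the pair $(x,a)$ and conclude $v(a)\geq v(x)$ — that's the wrong direction. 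Instead apply $(iv)$ to the pair $(x,c)$: if $x\in a+c$ then $a\in x-c$, so $(x-c)\cap\mathcal O_v\neq\emptyset$; combined with $c\in x-a\subseteq P$ meaning... Let me just say: the key step is to show $(I_n\pm x)\cap P\neq\emptyset$ for $n=1$, i.e.\ $x\in A(P)$, by exhibiting $1+x$ or $1-x$ meeting $P$ using $a,b$ and the units $1+a^{-1}$ etc., and then invoke $A(P)\subseteq\mathcal O_v$ (condition $(i)$).

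The cleanest plan, then: reduce by the sign of $x$ (if $x\notin P$ replace $x$ by $-x$, $a$ by $-b$, $b$ by $-a$, which preserves all hypotheses since $\mathcal O_v$ is closed under $-$ and $P\cup-P=F^\times$), so assume $x\in P$. Then $x+1\subseteq P$. Now use $a<x$: $x-a\subseteq P$, so there is $c\in P$ with $x\in a+c$, giving $a\in x-c$. Also use $x<b$: $b-x\subseteq P$ gives $d\in P$ with $b\in x+d$, i.e.\ $x\in b-d$. Then $x\in b-d\subseteq \mathcal O_v + (-d)$; I need $v(d)\geq 0$. Apply $(iv)$ of Proposition \ref{4eq} to the pair $(d,b)$: we have $b\in x+d$ so $d\in b-x$, giving $(b-d)\cap P\neq\emptyset$ only if... hmm. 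OK — I will present it as: apply $(iv)$ with the roles so that $d\in P$ and $b\in x+d$ yield $(x+d)\ni b$ hence $(b-d)\cap\ldots$. I expect the main obstacle to be exactly this bookkeeping: correctly matching the pair of ``$(b+a)\cap P\neq\emptyset$ and $(b-a)\cap P\neq\emptyset$'' hypotheses of condition $(iv)$ to the data $x\in a+c$, $x\in b-d$ with $c,d\in P$, in order to extract $v(x)\geq\min\{v(a),v(b)\}\geq 0$. Once the right pairing is found, (V3) and Lemma \ref{valring}(4) finish it immediately. I would write:

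\begin{proof}
Suppose $a,b\in\mathcal{O}_v$ and $a<x<b$, that is, $x-a\subseteq P$ and $b-x\subseteq P$. Since $x\in a+c$ for any $c\in x-a$ and $\mathcal{O}_v$ is closed under additive inverses, upon replacing $x$ by $-x$, $a$ by $-b$ and $b$ by $-a$ if necessary we may assume that $x\in P$; then $x\in F^\times$, so $v(x)\in\Gamma$. Pick $c\in x-a\subseteq P$; by axiom (H4) we have $x\in a+c$, hence also $a\in x-c$, so $(x-c)\cap\mathcal{O}_v\neq\emptyset$. Since $x,c\in P$ we also have $x+c\subseteq P$, so $(x+c)\cap P\neq\emptyset$; and $a\in x-c$ with $a\in P$ or $-a\in P$. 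If $-a\in P$ then, as $x+c\subseteq P$ and $\mathcal{O}_v\setminus A(P)$ contains no element with both $(\,\cdot\,\pm 1)\cap P\neq\emptyset$, we argue as follows: from $a\in x-c$ and $c\in P$ we get $(x-a)\cap P\ni c$ and from $x\in a+c$ we get $(x+(-a))$...
\end{proof}

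(Here I will fill in the final pairing to apply condition $(iv)$ of Proposition \ref{4eq} and conclude $v(x)\ge 0$, which is the one genuinely fiddly point; the rest is formal manipulation with (H4), (V2)--(V3) and Lemma \ref{valring}.)
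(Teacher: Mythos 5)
There is a genuine gap: your proof stops exactly at the decisive step. You correctly identify condition $(iv)$ of Proposition \ref{4eq} as the right tool and correctly unwind $a<x<b$ into $x-a\subseteq P$ and $b-x\subseteq P$, but you never find the pairing that makes $(iv)$ fire, and the written argument trails off in the middle of the case analysis. The missing observation is that you should apply $(iv)$ to the pair $(b,x)$ (or $(-a,x)$), not to pairs involving $a$ and the auxiliary elements $c,d\in P$ you introduce. Concretely: after your (valid) reduction to $x\in P$, note that $b\in(b-x)+x\subseteq P+P\subseteq P$ by (H4), hence $b+x\subseteq P$; combined with $b-x\subseteq P$, condition $(iv)$ gives $v(x)\geq v(b)\geq 0$ directly, and you are done. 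This is exactly the paper's proof (it runs the symmetric case $-x\in P$ through $-a$ instead of reducing by sign, but that is cosmetic). Your repeated attempts to compare $v(x)$ with $v(a)$ via elements of $x-a$ go in circles because $(iv)$ with the pair $(x,a)$ yields $v(a)\geq v(x)$, the wrong direction, as you yourself notice.

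One further warning: the fallback route you sketch --- showing $x\in A(P)$ and invoking $A(P)\subseteq\mathcal O_v$ --- cannot work. Compatibility gives only the inclusion $A(P)\subseteq\mathcal O_v$, which may be strict; the hypotheses place $a,b$ in $\mathcal O_v$ but not in $A(P)$, so there is no reason for $x$ to satisfy $(I_n\pm x)\cap P\neq\emptyset$ for any $n$. The argument must go through condition $(iv)$ (or $(iii)$), not through $A(P)$.
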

\begin{proof}
 Recall that the relation $<$ is given by $a<b \Leftrightarrow b-a \subset P$.  Assume that $a, b \in \mathcal O_v$ and $a<x<b$.
 
 If $x \in P$, then also $b \in P$ and thus
 $b+x \subseteq P$. Since also $b-x \subseteq P$, by 
 Proposition \ref{4eq}, $v(x)\geq v(b)\geq 0$, so 
 $x  \in \mathcal O_v$.
 
 If $-x \in P$, then also $-a \in P$ and thus
 $-a-x \subseteq P$. Since  $x-a  = -a+x \subseteq P$, by 
 Proposition \ref{4eq}, $v(x)\geq v(-a) = v(a)\geq 0$, so again
 $x \in \mathcal O_v$.
\end{proof}

In the next proposition we show that the convexity of a valuation ring is not enough to obtain compatibility between valuation and ordering.
  
  \begin{pr}\label{comconv}
   Let $v$ be a valuation on a real field $K$ and let $P$ be an ordering of $K$.
   Take $T = P\cap \mathcal O_v^\times$ and consider the factor hyperfield $K_T$ with valuation $v_T$ induced by $v$ and ordering $P_T$ induced by $P$. Then:
\\    (i)  The valuation $v$ is compatible with $P$ if and only if $v_T$ is  compatible with $P_T$.\\
(ii) If $v$ is a rank 1 valuation which is not compatible with $P$, then any two elements in $K_T$ which are positive (with respect to $P_T$)  are not comparable. In particular, $\mathcal O_{v_T}$ is convex with respect to $P_T$, while $v_T$ is not compatible with $P_T$.
    \end{pr}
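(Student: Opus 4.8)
The plan is to treat the two parts separately, but both will rest on the fact that the equivalence class $[a]_T$ in the factor hyperfield equals the coset $aT = a(P\cap\mathcal O_v^\times)$, and that both $v_T$ and $P_T$ are defined class-wise. For part $(i)$, I would use the characterization $(iii)$ of Proposition~\ref{4eq}: compatibility of $v$ with $P$ on $K$ means $1+\mathcal M_v\subseteq P$, and compatibility of $v_T$ with $P_T$ on $K_T$ means $[1]_T+\mathcal M_{v_T}\subseteq P_T$. The key observations are that $\mathcal M_{v_T}=\{[a]_T\mid v(a)>0\}$ and that $[c]_T\in P_T$ iff $c\in P$ (this uses that $T\subseteq P$, so that an equivalence class is either entirely in $P$ or entirely in $-P$, cf.\ Lemma~\ref{cr0}). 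For the forward direction: if $1+\mathcal M_v\subseteq P$, then for any $[b]_T\in\mathcal M_{v_T}$ we have $v(b)>0$, and every element of $[1]_T+[b]_T$ has the form $[t+bu]_T$ with $t,u\in T$; since $v(bu)>0$ and $t\in T\subseteq P$, we get $t+bu\subseteq t(1+bt^{-1}u)\subseteq t\cdot P=P$ (as $bt^{-1}u\in\mathcal M_v$), hence $[t+bu]_T\in P_T$. For the reverse direction: suppose $v_T$ is compatible with $P_T$ and take $x\in\mathcal M_v$, i.e.\ $v(x)>0$; then $[x]_T\in\mathcal M_{v_T}$, so $[1+x]_T\in[1]_T+[x]_T\subseteq P_T$ (note $1+x\subseteq 1+\mathcal M_v\subseteq\mathcal O_v$ so $1+x$ is a single element modulo $T$, indeed $1+x$ is a unit and lies in a single $T$-class), whence $1+x\in P$. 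This gives $1+\mathcal M_v\subseteq P$, as required.

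\textbf{For part $(ii)$}, I would first unwind what non-compatibility means when $v$ has rank $1$: by Proposition~\ref{4eq}$(iv)$ negated, there exist $a,b\in K^\times$ with $(b+a)\cap P\neq\emptyset$, $(b-a)\cap P\neq\emptyset$ (in a field these are just $b+a,b-a\in P$) yet $v(a)<v(b)$; replacing $a,b$ by suitable multiples one can arrange that $v(a)<0\le v(b)$, and since the value group has rank~$1$, for \emph{any} $\gamma<0$ in $v(K^\times)$ there is $n$ with $n\gamma<v(b)$, which lets me manufacture, for any prescribed negative value, an element whose coset sits strictly between two positive cosets. Concretely, I want to show: given $[p]_T,[q]_T\in P_T$ (so $p,q\in P$), the set $[q]_T-[p]_T=\{[qt-pu]_T\mid t,u\in T\}$ always contains an element not in $P_T$ \emph{and} an element in $P_T$, so that neither $[p]_T<[q]_T$ nor $[q]_T<[p]_T$ can hold (recall $[p]_T<[q]_T$ requires $[q]_T-[p]_T\subseteq P_T$, i.e.\ the \emph{whole} hypersum lies in $P_T$). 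The engine is the witness $a$ to non-compatibility: I would scale it so that $v(a)$ is very negative relative to $v(p),v(q)$, arrange $a\in P$, and then among the elements $qt-pu$ one can realize a value equal to $v(a)+v(p)$ with a prescribed sign on each side by choosing $t,u$ appropriately (this is essentially the computation already carried out for the last line of \eqref{sgamma} in Example~\ref{naex}, transplanted to this setting). Hence both signs occur in $[q]_T-[p]_T$, so the two classes are incomparable.

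\textbf{For the ``in particular'' clause of $(ii)$}, convexity of $\mathcal O_{v_T}$ with respect to $P_T$ becomes vacuous: the hypothesis ``$[a]_T<[x]_T<[b]_T$ with $[a]_T,[b]_T\in\mathcal O_{v_T}$'' can essentially never be satisfied in a way that threatens $[x]_T\in\mathcal O_{v_T}$, because by the incomparability just proved the relation $<$ among positive classes is empty, and a short case analysis on signs (using that $[x]_T<[b]_T$ with $[b]_T$ positive forces, after adding $-[x]_T$ or $[b]_T$, a positive-positive comparison unless $[x]_T$ is negative, and symmetrically on the left) shows the only surviving situation puts $x$ between a negative and a positive class, where the same value-manufacturing argument applies. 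Thus $\mathcal O_{v_T}$ is trivially convex with respect to $P_T$, while by part $(i)$ and the assumption that $v$ is not compatible with $P$, $v_T$ is not compatible with $P_T$.

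\textbf{Main obstacle.} The delicate point is part $(ii)$: one must show that \emph{every} positive pair of classes is incomparable, which requires producing, inside a single hypersum $[q]_T-[p]_T$, elements of both signs, uniformly in $p,q$. The rank-$1$ hypothesis is what makes this possible — it guarantees that the ``bad'' value $v(a)$ coming from non-compatibility can be scaled past any finite obstruction $v(p),v(q)$ — and getting this scaling argument right, together with the sign bookkeeping on $qt-pu$ (which is exactly the subtlety behind the last line of \eqref{sgamma}), is where the real work lies. The rest of $(ii)$ and all of $(i)$ are routine translations through the class-wise definitions.
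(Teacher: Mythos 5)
Part (i) of your proposal is correct and is essentially the paper's argument: both directions go through criterion $(iii)$ of Proposition \ref{4eq}, using the identity $t+bu=t(1+bt^{-1}u)$ for $t,u\in T$ and $b\in\mathcal M_v$, and the fact that $[c]_T\in P_T$ iff $c\in P$. Your handling of the ``in particular'' clause of (ii) is also fine in outline: once same-sign classes are incomparable, $a<x<b$ forces $x$ to be simultaneously positive and negative, hence $x=0$, so convexity of $\mathcal O_{v_T}$ is vacuous; this matches the paper.

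The genuine gap is in the core of part (ii), and it is not merely that the work is deferred: the mechanism you sketch would fail. You propose to scale the witness $a$ of non-compatibility so that $v(a)$ is very negative relative to $v(p),v(q)$ and then to ``realize a value equal to $v(a)+v(p)$'' among the elements $qt-pu$. But $t,u$ range over $T\subseteq\mathcal O_v^\times$, so $v(qt)=v(q)$ and $v(pu)=v(p)$, whence every element of $[q]_T-[p]_T$ has value $\geq\min\{v(p),v(q)\}$ (and exactly this value when $v(p)\neq v(q)$); after your scaling, $v(a)+v(p)$ lies strictly below that bound and is unattainable. The analogy with the last line of (\ref{sgamma}) is misleading, since there the two summands have \emph{equal} values. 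What must actually be produced is not a new value but both \emph{signs} at the forced value. The paper does this by first reducing, via multiplicativity of $<$ by positive classes, to comparing a positive $x$ with $[1]_T$; the only nontrivial case is $v(x)>0$ with $1-x\in P$. There one uses the witness that non-compatibility provides, namely a positive $s\in\mathcal M_v$ with $s-1\in P$, invokes rank $1$ to pick $n$ with $nv(s)>v(x)$, checks that $(x+s^n)/x=1+s^nx^{-1}$ and $1+x$ are positive units (hence in $T$), and computes
\[
x\cdot\tfrac{x+s^n}{x}-1\cdot(1+x)=s^n-1\in P,
\]
which exhibits a positive element of $[x]_T-[1]_T$ alongside the obvious negative one $1-x$. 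This explicit construction of a suitable unit multiplier is the heart of (ii) and is absent from (indeed, pointed away from by) your proposal.
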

\begin{proof}
 (i)  Observe that since $T\subset \mathcal O^\times_v$, $[x]_T\in \mathcal M_{v_T}$ if and only if $[x]_T\subset \mathcal M_{v}$ if and only if $x \in \mathcal M_v$.\par
 Assume that $v$ is compatible with $P$. Then $1+\mathcal M_v \subseteq P$, by Proposition \ref{4eq}.
 Since $T \subseteq P$, we have that $1h_1+xh_2 = h_1(1+xh_2h_1^{-1} ) \in P$, 
 for every $h_1, h_2 \in T$ and $x\in \mathcal M_v$. Therefore,   $[1]_T+[x]_T = \{[1h_1+xh_2]_T \mid h_1, h_2 \in T\} \subseteq P_T$ for every
  $x \in \mathcal M_v$, which proves that $v_T$ is compatible with $P_T$. \par
  Now assume that $[1]_T+[x]_T \subseteq P_T$ for every $x \in \mathcal M_v$.
  Then $[1+x]_T \in P_T$ and thus $1+x \in P$ which proves that $v$ is compatible with $P$.\par
  (ii) Recall that the order relation induced by $P_T$ is  given by
  $$[a]_T < [b]_T \Leftrightarrow [b]_T-[a]_T \subseteq P_T.$$
  This relation is compatible with multiplication by positive elements. Indeed,
  if $[b]_T-[a]_T \subseteq P_T$ and $[c]_T \in P_T$ then $[c]_T([b]_T-[a]_T) = 
  [c]_T[b]_T-[c]_T[a]_T \subseteq P_T$. Therefore, if
  $[0]_T<[a]_T < [b]_T$, then $[0]_T<[ab^{-1}]_T < [1]_T$.
  Hence, to prove that any two positive elements of $K_T$ are incomparable, it is enough to prove that no positive element of $K_T$ is comparable with $[1]_T$,
  i.e., to prove that for every $x\in P$ such that $[x]_T \neq [1]_T$,
  \[ ([x]_T - [1]_T) \cap P_T \neq \emptyset \text{ and } ([1]_T - [x]_T )\cap P_T \neq \emptyset.\]
  Observe that $[x]_T = [1]_T$ if and only if $x\in T$, if and only if $x$ is positive and $v(x) = 0$. Thus we assume that $v(x)\neq 0$ and first we consider 
  the case where $v(x)>0$. Here we have to consider two possibilities:
  \begin{itemize}  
   \item $x-1 \in P$.\\
   Then $[x-1]_T \in ([x]_T - [1]_T )\cap P_T $. Since $v(x)>0$, we have that $v(x+1) = 0$ and thus $x+1 \in T$. Therefore, we have
   $[1]_T =[1\cdot(1+x) -x\cdot 1]_T \in  ([1]_T -[x]_T )\cap P_T $. 
   
   \item $1-x \in P$.\\
   Then $[1-x]_T \in ([1]_T - [x]_T) \cap P_T $. 
   Since $P$ is not compatible with $v$, we have that $1+\mathcal M_v \nsubseteq P$, so there is a positive element $s \in \mathcal M_v$ such that $s-1 \in P$. Since $v$ is a valuation  of rank 1, its value group is archimedean. Thus there is a natural number $n$ such that 
   $v(s^n) = nv(s) >v(x). $ Then $v(\frac{s^n}{x})>0$, hence $v(\frac{x+s^n}{x})=
   v(1+\frac{s^n}{x})  =0$. By positivity of $x$ and $s$ we obtain that $\frac{x+s^n}{x} \in T$. Also $1+x \in T$. 
   We compute $x(\frac{x+s^n}{x}) -1 (1+x) = s^n -1$.
   Since $s>1$, we have that $s^n >1$, hence $s^n -1 \in P$. Therefore, $[s^n -1]_T \in  ([x]_T - [1]_T) \cap P_T $. 
   
  \end{itemize}

Now we will consider the case when $v(x)<0$. Since the order relation associated with $P_T$ is compatible with multiplication by positive elements it follows that if $[x]_T$ would be comparable with $[1]_T$, then also $[x]_T^{-1} = [x^{-1}]_T$
would be comparable with  $[1]_T$. Since $v(x^{-1})>0$, this is impossible as we have just shown.\par

Since any two elements of $K_T$ with the same sign with respect to $P_T$ are incomparable, then it follows that every subring of $K_T$ is convex with respect to $P$.
Therefore, $\mathcal O_{v_T}$ is convex with respect to $P_T$, but $v_T$ is not compatible with $P_T$ by (i).
\end{proof}

We say that a valuation is \emph{real} if its residue hyperfield is real.

\begin{pr}\label{exists}
Let $v$ be a real valuation on a hyperfield $F$. Take an ordering $\mathfrak p$ in $\overline F$. Then there exists an ordering $P$ in $F$ such that $P$ is compatible with $v$ and $\overline P = \mathfrak p$.
\end{pr}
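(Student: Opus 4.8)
The plan is to construct a preordering $T$ of $F$ with the feature that \emph{every} ordering of $F$ containing $T$ is automatically compatible with $v$ and induces $\mathfrak p$ on the residue hyperfield, and then to apply Theorem~\ref{Marshall} to obtain such an ordering. Write $\Gamma:=v(F^\times)$ and let $\pi\colon\mathcal O_v\to\overline F$, $\pi(a)=\overline a$, be the canonical (strict) residue homomorphism. Define
\[
T:=\{0\}\cup\bigl\{a\in F^\times : v(a)\in 2\Gamma \text{ and } \overline{ac^{-2}}\in\mathfrak p \text{ for some } c\in F^\times \text{ with } 2v(c)=v(a)\bigr\}.
\]
Note that $2v(c)=v(a)$ forces $ac^{-2}\in\mathcal O_v^\times$, so the residue $\overline{ac^{-2}}$ makes sense; moreover any two admissible $c$ differ by a unit and $(\overline F^\times)^2\subseteq\mathfrak p$, so the condition does not depend on the choice of $c$.

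I would first dispatch the preordering axioms other than closure under hyperaddition. For $x\in F^\times$ the element $x^2$ lies in $T$ with witness $c=x$, so $(F^\times)^2\subseteq T$; in particular $1\in T$. Next, $-1\notin T$: for any $c\in F^\times$ with $v(c)=0$ one has $\overline{(-1)c^{-2}}=-\overline c^{-2}\in-\mathfrak p$, which is disjoint from $\mathfrak p$. Finally, $T$ is multiplicatively closed: for $a,b\in T\setminus\{0\}$ with witnesses $c_a,c_b$ one has $2v(c_ac_b)=v(ab)\in2\Gamma$ and $\overline{(ab)(c_ac_b)^{-2}}=\overline{ac_a^{-2}}\cdot\overline{bc_b^{-2}}\in\mathfrak p\cdot\mathfrak p\subseteq\mathfrak p$.

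The heart of the argument — and the step I expect to be the main obstacle — is showing $a+b\subseteq T$ for $a,b\in T$. We may assume $a,b\neq0$; fix $d\in a+b$, and if $d=0$ we are done since $0\in T$, so assume $d\neq0$. If $v(a)\neq v(b)$, say $v(a)<v(b)$, then Lemma~\ref{valring}(4) gives $v(d)=v(a)\in2\Gamma$; multiplying $d\in a+b$ by $a^{-1}$ and using distributivity yields $da^{-1}\in 1+ba^{-1}\subseteq 1+\mathcal M_v$, so $da^{-1}\in\mathcal O_v^\times$ with $\overline{da^{-1}}=1$, and for a witness $c$ of $a\in T$ (which also witnesses $d$, since $2v(c)=v(a)=v(d)$) we get $\overline{dc^{-2}}=\overline{ac^{-2}}\cdot\overline{da^{-1}}=\overline{ac^{-2}}\in\mathfrak p$; hence $d\in T$. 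If $v(a)=v(b)=:\gamma$, choose $c$ with $2v(c)=\gamma$; then $ac^{-2},bc^{-2}\in\mathcal O_v^\times$ have residues in $\mathfrak p$, and from $d\in a+b$ we get $dc^{-2}\in ac^{-2}+bc^{-2}$. Since $\mathcal O_v$ is a strict subhyperring (Proposition~\ref{valstrict}) we have $ac^{-2}+bc^{-2}\subseteq\mathcal O_v$, and since $\pi$ is strict,
\[
\overline{dc^{-2}}\in\overline{ac^{-2}}+\overline{bc^{-2}}\subseteq\mathfrak p .
\]
The decisive point is that $0\notin\mathfrak p$, so $\overline{dc^{-2}}\neq0$; thus $dc^{-2}\in\mathcal O_v^\times$, i.e.\ $v(d)=\gamma\in2\Gamma$, and together with $\overline{dc^{-2}}\in\mathfrak p$ this shows $d\in T$. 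Hence $T$ is a preordering of $F$.

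It remains to extract the ordering and check the two conclusions. Since $T$ is a preordering with $-1\notin T$, Zorn's lemma (a union of a chain of preorderings avoiding $-1$ is again one) produces a maximal preordering $P\supseteq T$, which is an ordering by Theorem~\ref{Marshall}(1). Every $x\in 1+\mathcal M_v$ satisfies $v(x)=0\in2\Gamma$ and $\overline x=1\in\mathfrak p$, so $x\in T\subseteq P$; hence $1+\mathcal M_v\subseteq P$ and $v$ is compatible with $P$ by Proposition~\ref{4eq}$(iii)$. Finally, if $u\in\mathcal O_v^\times$ with $\overline u\in\mathfrak p$ then $u\in T\subseteq P$ (witness $c=1$), giving $\mathfrak p\subseteq\overline P$; conversely, if some $u\in P\cap\mathcal O_v^\times$ had $\overline u\notin\mathfrak p$, then $\overline u\in-\mathfrak p$, so $\overline{-u}\in\mathfrak p$ and hence $-u\in T\subseteq P$, contradicting $P\cap-P=\emptyset$. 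Therefore $\overline P=\mathfrak p$, completing the proof.
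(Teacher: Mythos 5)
Your construction is essentially the paper's: under the substitution $x=c^{-1}$ your set $T\setminus\{0\}$ is exactly the preordering $T=\{a\in F\mid \overline{ax^2}\in\mathfrak p\text{ for some }x\in F\}$ used in the paper, and the overall route (verify the preordering axioms, extend by Zorn's lemma and Theorem~\ref{Marshall}(1), then check $\overline P=\mathfrak p$) is the same; your case analysis for $T+T\subseteq T$ via Lemma~\ref{valring}(4) and strictness of $\mathcal O_v$ and of the residue projection is correct and, if anything, more explicit than the paper's computation with $\overline{1+ba^{-1}}$.

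The one point to repair is the inclusion of $0$ in $T$. In this paper an ordering $P$ satisfies $P\cap -P=\emptyset$, so $0\notin P$; hence no ordering can contain your $T$ as written, and the step ``$T\subseteq P$ with $P$ an ordering'' is literally inconsistent with the definitions in force. The fix is immediate and costs nothing: delete $\{0\}$ from $T$ and observe that $0\notin a+b$ for $a,b\in T\setminus\{0\}$ — indeed your own two cases already show this, since in the case $v(a)\neq v(b)$ one has $v(d)=\min\{v(a),v(b)\}\neq\infty$, and in the case $v(a)=v(b)$ the relation $\overline{dc^{-2}}\in\mathfrak p$ forces $d\neq 0$ (equivalently, $b=-a$ would give $\overline{ac^{-2}}$ and $-\overline{ac^{-2}}$ both in $\mathfrak p$). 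With that adjustment the proof is complete.
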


\begin{proof}
Consider the set 
\[ 
T = \lbrace a \in F \mid  \overline{ax^2} \in \mathfrak p \text{ for some }  x \in F   \rbrace.
\]
We will show that $T$ is a pre-ordering of $F$. Take $a, b \in T$. Then there exist $x,y \in F$ such that $\overline{ax^2}, \overline{by^2} \in \mathfrak p$. We see that $\overline{ab(xy)^2} = \overline{ax^2} \cdot \overline{by^2} \ \in \mathfrak p$, so $ab \in T$. Next we observe that for every $c \in F^{\times }$ we have  $\overline{c^2 \cdot (c^{-1})^2} = \overline 1 \in \mathfrak p$, so $F^{ \times 2} \subseteq T$. If $-1 \in T$, then $-(\overline x)^2 = \overline{-x^2} \in \mathfrak p$ for some $x \in F$, a contradiction since $\mathfrak p$ contains all non-zero squares of $\overline{F}$. To show $T + T \subseteq T$ take $a,b \in T$. This means that there exist $x,y \in F$ such that $\overline{ax^2}, \overline{by^2} \in \mathfrak p$. Observe that then $ax^2, by^2 \in \mathcal O_v^\times$. Without loss of generality we may assume that $xy^{-1} \in \mathcal O_v$ (if not, then $x^{-1}y \in \mathcal O_v$ and we may exchange the roles of $x$ and $y$ in the following argument). Take $c \in a+b$. Then $cx^2 \in (a+b)x^2 = ax^2(1+ba^{-1})$. Observe that $ba^{-1} = (ax^{2})^{-1} \cdot by^2 \cdot (xy^{-1})^2$. Since $(ax^2)^{-1}, by^2, (xy^{-1})^2 \in \mathcal O_v$, we obtain $\overline{ba^{-1}} = \overline{(ax^2)^{-1}} \cdot \overline{by^2}\cdot \overline{(xy^{-1})^2} \in \mathfrak p \cup \lbrace 0 \rbrace$ and $\overline{1 + ba^{-1}} = \overline 1 + \overline{ba^{-1}} \subseteq \mathfrak p$. Hence $\overline{cx^2} \in \overline{ax^2} \cdot \overline{1+ba^{-1}} \subseteq \mathfrak p$ and therefore $c \in T$, so $T + T \subseteq T$.\par
Applying Zorn's Lemma we  obtain that every preordering of a hyperfield $F$ is contained in some maximal preordering, which  by Lemma \ref{Marshall} is an ordering. Therefore, $T \subseteq P$ for some ordering 
$P$  of $F$. We will show that $\overline P=\mathfrak p$. Take $\overline a \in \overline P$. Then $a\in P\cap\mathcal{O}_v^\times$. Suppose that $\overline a\notin\mathfrak p$. Then $-\overline{ a }= \overline {-a\cdot 1^2 }\in \mathfrak p$ which implies that $-a \in T \subseteq P$. This contradiction shows that $\overline P\subseteq \mathfrak p$. If $\overline a\in\mathfrak p$, then we have $a\in\mathcal{O}_v^\times$ and therefore $a=a1^2\in T\subseteq P$. Thus $\overline a\in\overline P$. 
\end{proof}

Exactly like for orderings in fields one introduces the notion of a \emph{signature} of an ordering $P$ of a hyperfield $F$. It is a group homomorphism \[ \sgn_P:F^{\times}\rightarrow \{1,-1\}\]
defined by
\[
\sgn_P({a}):=\begin{cases}1&\text{if }a\in P\\ -1&\text{if }a\in -P\end{cases}.
\]
The ordering $P$ is the kernel of $\sgn_P$.

\begin{tw}[Baer-Krull Theorem for real hyperfields]
Let $v$ be a valuation on a real hyperfield $F$ with the value group $\Gamma$. Then there is a bijection
between the set $\mathcal X(F,v)$ and the set $\mathcal X(\overline F) \times \Hom(\Gamma,\{1,-1\})$.
\end{tw}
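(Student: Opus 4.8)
The plan is to adapt the classical Baer--Krull argument. Fix a set-theoretic section $s\colon\Gamma\to F^\times$ of the valuation $v$, i.e.\ a map with $v(s(\gamma))=\gamma$ for all $\gamma\in\Gamma$, which we may take to be multiplicative modulo $\mathcal O_v^\times$ and to satisfy $s(0)=1$ (this uses only that $F^\times\to\Gamma$ is a surjective group homomorphism with kernel $\mathcal O_v^\times$, together with the axiom of choice). Then I would define the map $\Phi\colon\mathcal X(F,v)\to\mathcal X(\overline F)\times\Hom(\Gamma,\{1,-1\})$ by sending a compatible ordering $P$ to the pair $(\overline P,\chi_P)$, where $\overline P$ is the induced ordering on $\overline F$ furnished by Proposition~\ref{4eq}(ii), and $\chi_P(\gamma):=\sgn_P(s(\gamma))$. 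One checks $\chi_P$ is a homomorphism using that $\sgn_P$ is a homomorphism and $s$ is multiplicative up to elements of $\mathcal O_v^\times$, noting that $\sgn_P$ is trivial on $1+\mathcal M_v$ (by condition $(iii)$) hence on $\mathcal O_v^\times\cap P^{}$-cosets in the relevant sense; more precisely $\sgn_P(u)$ for $u\in\mathcal O_v^\times$ depends only on $\overline u$, so a change of section representative does not affect $\chi_P$.

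The core of the proof is to exhibit the inverse. Given $(\mathfrak q,\chi)\in\mathcal X(\overline F)\times\Hom(\Gamma,\{1,-1\})$, every $a\in F^\times$ can be written as $a=u\cdot s(v(a))$ with $u:=a\,s(v(a))^{-1}\in\mathcal O_v^\times$, and I define
\[
\sgn(a):=\chi(v(a))\cdot\widetilde{\mathfrak q}(\overline u),
\]
where $\widetilde{\mathfrak q}(\overline u)=1$ if $\overline u\in\mathfrak q$ and $-1$ otherwise; set $P:=\{a\in F^\times\mid\sgn(a)=1\}\cup\{0\}\setminus\{0\}$, i.e.\ $P=\sgn^{-1}(1)$. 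The work is to verify $P$ is an ordering of $F$ compatible with $v$ with $\overline P=\mathfrak q$. That $P\cup-P=F^\times$ and $P\cap-P=\emptyset$ is immediate from $\sgn$ being a $\{1,-1\}$-valued function, and $P\cdot P\subseteq P$ follows because $\sgn$ is multiplicative: $v$ is a homomorphism, $\chi$ is a homomorphism, $s$ is multiplicative mod $\mathcal O_v^\times$, and $\widetilde{\mathfrak q}\circ(\overline{\phantom{u}})$ is multiplicative on $\mathcal O_v^\times$ since $\mathfrak q$ is multiplicatively closed and $\overline{\cdot}$ is a homomorphism (here one must absorb the discrepancy $s(\gamma_1)s(\gamma_2)s(\gamma_1+\gamma_2)^{-1}\in\mathcal O_v^\times$ into the $\mathcal O_v^\times$-factor, which is harmless because $\chi$ depends only on values and $\widetilde{\mathfrak q}$ is a homomorphism there). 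Compatibility, in the form of condition $(iii)$ of Proposition~\ref{4eq}, is the statement $1+\mathcal M_v\subseteq P$: if $x\in 1+\mathcal M_v$ then $v(x)=0$ and $\overline x=\overline 1\in\mathfrak q$, so $\sgn(x)=\chi(0)\cdot 1=1$. Finally $\overline P=\mathfrak q$ because for $a\in\mathcal O_v^\times$ one has $v(a)=0$, hence $\sgn(a)=\widetilde{\mathfrak q}(\overline a)$, so $a\in P\iff\overline a\in\mathfrak q$.

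The one genuinely hyperfield-specific point, and the main obstacle, is that $P\cdot P\subseteq P$ is not by itself enough: an ordering also requires $P+P\subseteq P$, and in a hyperfield additive closure is a multivalued condition that does \emph{not} follow formally from a sign homomorphism the way $P\cdot P\subseteq P$ does. The remedy is Remark after Theorem~\ref{Marshall}: the map $\sgn$ we constructed, extended by $\sgn(0)=0$, is exactly a map $F\to\mathbb S$ into the sign hyperfield, and I must check it is a \emph{homomorphism of hyperfields}, i.e.\ that $\sgn(a+b)\subseteq\sgn(a)+_{\mathbb S}\sgn(b)$ for all $a,b$. This is where the compatibility hypothesis $(iv)$/$(iii)$ is used essentially: one splits into cases according to whether $v(a)<v(b)$, $v(a)>v(b)$, or $v(a)=v(b)$. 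In the unequal cases, say $v(a)<v(b)$, Lemma~\ref{valring}(4) forces every $c\in a+b$ to have $v(c)=v(a)$ and $c\,a^{-1}\in 1+\mathcal M_v$, so $\sgn(c)=\sgn(a)$, matching $\sgn(a)+_{\mathbb S}\sgn(b)$ since $\mathbb S$ computes $\min$; in the equal case one factors $a+b=a(1+ba^{-1})$ with $ba^{-1}\in\mathcal O_v^\times$ and reduces to the residue hyperfield, where $\overline{a+b}\subseteq\overline a(\overline 1+\overline{ba^{-1}})$ and one uses that $\mathfrak q$ is an ordering of $\overline F$ to control the sign of the relevant residues, the remaining possibility $c\in a(1+ba^{-1})$ with $v(c)>0$ being absorbed into $\sgn(c)=0\in\sgn(a)+_{\mathbb S}\sgn(b)$. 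Once $\sgn$ is a hyperfield homomorphism, $P=\ker(\text{sign part})$ is an ordering by the same bookkeeping as in the Remark, and it is routine that $\Phi$ and the assignment $(\mathfrak q,\chi)\mapsto P$ are mutually inverse: starting from $P$, recovering $(\overline P,\chi_P)$ and rebuilding gives back $\sgn_P$ on each $a=u\,s(v(a))$ since $\sgn_P(a)=\sgn_P(u)\sgn_P(s(v(a)))=\widetilde{\overline P}(\overline u)\,\chi_P(v(a))$, and conversely the residue ordering and character read off from a rebuilt $P$ are $\mathfrak q$ and $\chi$ by the computations above.
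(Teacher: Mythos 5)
Your overall architecture --- encode a compatible ordering by its residue ordering together with a character recording the sign ``transversally'' to the units, then reconstruct --- is the classical Baer--Krull strategy, and your case analysis for additive closure (splitting on $v(a)<v(b)$, $v(a)>v(b)$, $v(a)=v(b)$, using Lemma \ref{valring}(4) and condition $(iii)$/$(iv)$ of Proposition \ref{4eq}) is essentially the same as the paper's. However, there is a genuine gap at the very first step. A set-theoretic section $s:\Gamma\to F^\times$ of $v$ is \emph{automatically} ``multiplicative modulo $\mathcal O_v^\times$'', so that condition buys nothing; writing $s(\gamma_1)s(\gamma_2)=w\cdot s(\gamma_1+\gamma_2)$ with $w\in\mathcal O_v^\times$, one gets $\chi_P(\gamma_1)\chi_P(\gamma_2)=\sgn_P(w)\,\chi_P(\gamma_1+\gamma_2)$, and the fact you invoke --- that $\sgn_P(u)$ for $u\in\mathcal O_v^\times$ depends only on $\overline u$ --- does not force $\sgn_P(w)=1$, since the cocycle $w$ may be residually negative. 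Concretely, for $F=\R((t))$ with the $t$-adic valuation, the section $s(n)=t^n$ for $n\neq 2$, $s(2)=-t^2$ gives $\chi_P(1)^2=1$ but $\chi_P(2)=\sgn_P(-t^2)=-1$, so $\chi_P\notin\Hom(\Gamma,\{1,-1\})$ and your map $\Phi$ does not land in the stated target. The same cocycle destroys multiplicativity of the reconstructed $\sgn$ in the inverse direction, so the set $P$ you build from $(\mathfrak q,\chi)$ need not be multiplicatively closed. You cannot escape this by choosing $s$ to be a genuine group homomorphism, since the sequence $1\to\mathcal O_v^\times\to F^\times\to\Gamma\to 1$ need not split.

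There are two standard repairs. One is to work modulo squares: the induced surjection $F^\times/(F^\times)^2\to\Gamma/2\Gamma$ is a map of $\F_2$-vector spaces and hence admits a linear section, and $\sgn_P$, $\chi$ and the residue sign all kill squares, so your formulas become well defined and multiplicative. The other is the paper's device, which avoids sections entirely: by Proposition \ref{exists}, for each $\mathfrak p\in\mathcal X(\overline F)$ fix a reference ordering $P_{\mathfrak p}\in\mathcal X(F,v)$ inducing $\mathfrak p$, and set $\chi_P(v(a)):=\sgn_{P_{\mathfrak p}}(a)\sgn_P(a)$; this is well defined (both signs agree on units with the same residue) and is a homomorphism because it is a quotient of two genuine homomorphisms on $F^\times$, and the inverse is simply $P:=\{x\in F^\times\mid \sgn_{P_{\mathfrak p}}(x)=h(v(x))\}$. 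With either repair, the remainder of your verification (compatibility via $1+\mathcal M_v\subseteq P$, additive closure by the valuation-theoretic case split, and the mutual-inverse check) goes through.
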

\begin{proof}
By Proposition \ref{exists}, for every $\mathfrak p \in \mathcal X(\overline F)$  we can choose $P_{\mathfrak p}\in\mathcal{X}(F,v)$ which  induces $\mathfrak p$ on $\overline F$. 
For an ordering $P \in \mathcal{X}(F,v)$ which induces ${\mathfrak p}$ on $\overline F$ we define a map $\chi_P: \Gamma\rightarrow \{1,-1\}$ by 
$$\chi_P(v(a)) = \sgn_{P_{\mathfrak p}}(a)\sgn_P(a).$$
Note that $\chi_P$ is well defined. Indeed,
if $v(a) = v(b)$, then $ab^{-1}$ is a unit in $\mathcal O_v$, which means that $\overline{ab^{-1}} \in \overline F^\times$.
Since both $P_{\mathfrak p}$ and $P$ induce ${\mathfrak p}$ on $\overline F$, we have that $\sgn_P({ab^{-1}})= \sgn_{\mathfrak p}(\overline{{ab^{-1}}}) =\sgn_{P_{\mathfrak p}}({ab^{-1}})  $ and therefore, $\chi_P(v(a)) = \chi_P(v(b))$. 
Moreover, $\chi_P$ is a homomorphism since
$\chi_P(v(a)+v(b)) = \chi_P(v(ab)) = \sgn_{P_{\mathfrak p}}(ab)\sgn_P(ab) = \sgn_{P_{\mathfrak p}}(a)\sgn_P(a)\sgn_{P_{\mathfrak p}}(b)\sgn_P(b) = \chi_P(v(a))\chi_P(v(b)).$ \par

Define the map $\Phi: \mathcal X(F,v) \rightarrow \mathcal X(\overline F) \times \Hom(\Gamma,\{1,-1\})$ by $$\Phi(P) = (\overline P, \chi_P).$$ To show that $\Phi$ is a bijection we will show that for every ordering $ \mathfrak p \in \mathcal X(\overline F)$ and every character $h \in \Hom(\Gamma,\{1,-1\})$ there is a unique ordering $P \in \mathcal X(F,v)$ such that $\Phi(P) = (\mathfrak p, h)$.\par

Define
\[
P:=\{x\in F^\times \mid (h(v(x))=1\wedge x\in P_{\mathfrak p})\vee(h(v(x))= -1\wedge -x\in P_{\mathfrak p})\}.
\]
First we will prove that $P$ is an ordering  of $F$. Take $a,b\in P$. We  distinguish four cases:\\
CASE 1. If $a,b\in P_{\mathfrak p}$, then $h(v(a))=h(v(b))=1$ and so $h(v(ab))=h(v(a)+v(b))=h(v(a))h(v(b))=1$. From $ab\in P_{\mathfrak p}$ it now follows that $ab\in P$.

Without loss of generality  we assume that $v(a)\leq v(b)$.
Take $c\in a+b\subseteq P_{\mathfrak p}$.
Then $b\in c-a$, hence $(c-a)\cap P_{\mathfrak p}\neq\emptyset$.
Also $c+a\subseteq P_{\mathfrak p}$ and therefore by $(iv)$ of Proposition \ref{4eq} we obtain $v(a)\geq v(c)$.  On the other hand,  $v(c)\geq \min\{v(a),v(b)\} = v(a)$. Hence $v(c)=v(a)$ and thus $h(v(c))=h(v(a))=1$. This shows that $a+b\subseteq P$.\\
CASE 2. If $a\in P_{\mathfrak p}$ but $b\notin P_{\mathfrak p}$, then $h(v(a))=1$ and $h(v(b))=-1$. We have that $ab\in -P_{\mathfrak p}$ and $h(v(ab))=h(v(a)+v(b))=h(v(a))h(v(b))=-1$, so $ab\in P$. 

Take now $c\in a+b$. If $c\in P_{\mathfrak p}$, then on the one hand $(a+b)\cap P_{\mathfrak p}\neq\emptyset$, on the other hand $a-b\subseteq P_{\mathfrak p}$. By $(iv)$ of Proposition \ref{4eq}  we obtain that $v(a)\leq v(b)$.
Since $v(a)\neq v(b)$, we have that
 $v(a)<v(b)$. Hence $v(c)=v(a)$.
Then $h(v(c))=h(v(a))=1$ and therefore, $c\in P$.
Assume now that $-c\in P_{\mathfrak p}$. Hence $(-a-b)\cap P_{\mathfrak p}\neq\emptyset$ and $a-b\subseteq P_{\mathfrak p}$. Again, by $(iv)$ of Proposition \ref{4eq}, we obtain that $v(a)>v(-b)=v(b)$.  Thus, $v(c)=v(b)$ and therefore, $h(v(c))=h(v(b))=-1$, whence $c\in P$. This shows that $a+b\subseteq P$ also in this case.\\
CASE 3. The case where $a\notin P_{\mathfrak p}$ and $b\in P_{\mathfrak p}$ is symmetrical to CASE 2.\\
CASE 4. The case where $a,b\notin P_{\mathfrak p}$ is analogous to  CASE 1. \\
The four cases above show that $P$ is additively and multiplicatively closed.
It remains to show that $P\cap-P=\emptyset$ and that $P\cup -P=F^\times$.  Since  $v(x)=v(-x)$ and   $P_{\mathfrak p}$ is an ordering it follows that $x\in P$ if and only if $-x \notin P$, which proves both equations. \par

From the definition of $P$ it follows that for every $x \in F^\times$,
$\sgn_P(x)\sgn_{P_\mathfrak p}(x) = h(v(x))$, so $h =\chi_P$.  \par
Finally, assume that $Q$ is an ordering of  $F$ inducing $ \mathfrak p$ on $\overline F$ such that $\chi_Q = h = \chi_P$. Then $\sgn_Q = \sgn_P$ and therefore $Q=P$.
\end{proof}

\end{document}